\theoremstyle{plain}
\newtheorem{theorem}                {Theorem}      [section]
\newtheorem{proposition}  [theorem]  {Proposition}
\newtheorem{corollary}    [theorem]  {Corollary}
\newtheorem{lemma}        [theorem]  {Lemma}
\theoremstyle{definition}
\newtheorem{remark}       [theorem]  {Remark}
\newtheorem{definition}   [theorem]  {Definition}
\newtheorem{problem}      [theorem]  {Problem}
\numberwithin{equation}{section}
\def \R{{\mathbb R}}
\def \s {{\mathbb S}}
\def \h {{\mathbb H}}
\def \Hy{{\mathbb H}}
\def \C{{\mathbb C}}
\def \link {~}
\def \1 {\`}
\DeclareMathOperator{\sol}{Sol}
\numberwithin{equation}{section}
\title[Polyharmonic helices]{Polyharmonic helices}
\author{S.~Montaldo}
\address{Universit\`a degli Studi di Cagliari\\
Dipartimento di Matematica e Informatica\\
Via Ospedale 72\\
09124 Cagliari, Italia}
\email{montaldo@unica.it}
\author{A.~Ratto}
\address{Universit\`a degli Studi di Cagliari\\
Dipartimento di Matematica e Informatica\\
Via Ospedale 72\\
09124 Cagliari, Italia}
\email{rattoa@unica.it}
\begin{document}
\begin{abstract}
The main aim of this paper is to investigate the existence of Frenet helices which are polyharmonic of order $r$, shortly, $r$-harmonic. We shall obtain existence, non-existence and classification results. More specifically, we obtain a complete classification of proper $r$-harmonic helices into the $3$-dimensional solvable Lie group Sol$_3$. Next, we investigate the existence of proper $r$-harmonic helices into Bianchi-Cartan-Vranceanu spaces and, in this context, we find new examples. Finally, we shall establish some non-existence results both for Frenet curves and Frenet helices of order $n \geq 4$ when the ambient space is the Euclidean sphere $\s^m$. 
\end{abstract}

\subjclass[2010]{Primary: 58E20. Secondary: 53A04, 53C30.}

\keywords{Polyharmonic curves, helices, $3$-dimensional Sol$_3$ space, BCV-spaces}

\maketitle
\section{Introduction}

\textit{Harmonic maps} are the critical points of the {\em energy functional}
\begin{equation}\label{energia}
E(\varphi)=\frac{1}{2}\int_{M_1}\,|d\varphi|^2\,dV_{g_1} \, ,
\end{equation}
where $\varphi:M_1\to M_2$ is a smooth map between two Riemannian
manifolds $(M_1,g_1)$ and $(M_2,g_2)$. We refer to \cite{MR703510, MR1363513} for an introduction to this important topic. 

The study of \textit{polyharmonic maps of order} $r$, shortly \textit{$r$-harmonic maps}, was first proposed by Eells and Lemaire in \cite{MR703510}. These maps are defined as the critical points of the functionals 
\[
E_{r}(\varphi)=\frac{1}{2}\int_{M_1}\,|(d+d^*)^r\varphi|^2\,dV_{g_1} 
\]
which represent a version of order $r\geq2$ of the classical energy \eqref{energia}. We refer to \cite{MR4106647} for a detailed discussion on the definition of the functionals $E_{r}(\varphi)$.

In this context, the most widely studied instance is $r=2$. This is the case of the so-called \textit{bienergy} functional and its critical points are known as \textit{biharmonic maps}. At present, a very ample literature on biharmonic maps is available (see \cite{MR4265170} and references therein).

More recently, several authors worked intensively on the case $ r \geq 3$. For instance, we refer to \cite{MR4106647, MR4444191, MR4632927, MR4598081,MR2869168, MR3403738, MR3371364, MOR-Israel, MR3711937, MR3790367} for background and results.

In this paper we shall focus on the study of \textit{$r$-harmonic curves} in the Euclidean sphere $\s^m$, in the $3$-dimensional solvable Lie group Sol$_3$ and into the so-called Bianchi-Cartan-Vranceanu spaces. Thus, in order to simplify the exposition, from now on we shall assume that the ambient space $M$ is orientable and that an orientation on $M$ has been fixed.

In order to avoid trivialities arising from re-parametrization it is convenient to restrict the attention to curves parametrized by the arc length and we shall adopt this assumption throughout the whole paper. 

Now, let $\gamma:I \subset \R \to M$ denote a smooth curve parametrized by the arc length $s$ and let $T$ denote its unit tangent vector. Then $\gamma$ is $r$-harmonic if and only if its $r$-tension field $\tau_r(\gamma)$ vanishes, i.e.,(see \cite{MR4542687,MR2869168,MR4308322})
\begin{equation}\label{r-harmonicity-curves}
\tau_r(\gamma)=\nabla^{2r-1}_T T+ \sum_{\ell=0}^{r-2}(-1)^\ell R\left (\nabla^{2r-3-\ell}_T T ,\nabla^{\ell}_T T\right )T=0 \,,
\end{equation}
where $\nabla^{0}_T T=T,\,\nabla^{k}_T T=\nabla_T \left ( \nabla^{k-1}_T T\right ) $ and $R$ is the Riemannian curvature operator of $M$.

We point out that any geodesic is trivially $r$-harmonic for all $r \geq 1$. Therefore, we say that $\gamma$ is a \textit{proper} $r$-harmonic curve if it is $r$-harmonic and \textit{not} a geodesic.

A general study of \eqref{r-harmonicity-curves} is a difficult task. Therefore, in this paper we restrict our attention to a geometrically rich and significant family of curves. Indeed,
\begin{definition}[\textbf{$n$-Frenet curves}]\label{def-curves} Let $\gamma:I \to M$ be a smooth curve parametrized by arc length and assume $2 \leq n \leq m=\dim M$. Then we say that $\gamma$ is a \textit{Frenet curve of order} $n$, shortly, an \textit{$n$-Frenet curve}, if it admits a Frenet $n$-field $\{F_1,\ldots,F_n \}$ along $\gamma$ such that
\begin{equation}\label{Frenet-field-general}
\left \{
\begin{array}{ccl}
F_1&=&T \\
\nabla_T F_1&=& k_1 \,F_2 \\
\nabla_T F_i&=&-k_{i-1} F_{i-1}+k_i F_{i+1}\,, \quad i=2,\ldots, n-1\,,\\
\nabla_T F_n&=&-k_{n-1} F_{n-1} 
\end{array} \right .
\end{equation}
where $k_1(s), \ldots, k_{n-1}(s)$ are positive functions, called the \textit{curvatures} of the curve. In the special case that $n=m\geq 2$  we choose $F_n$ in such a way that $\{F_1,\ldots,F_n \}$ is a positively oriented orthonormal base of $T_{\gamma(s)}M$. In this case we allow that $k_{n-1}(s)$ can assume any real value. Finally, we say that an $n$-Frenet curve $\gamma$ is \textit{full} if $k_{n-1}(s)$ does not vanish at any point.
\end{definition}
\begin{definition}[\textbf{$n$-Frenet helices}]\label{def-helices} Let $\gamma$ be a Frenet curve of order $n$. We say that $\gamma$ is \textit{a Frenet helix of order $n$}, shortly, an \textit{$n$-Frenet helix}, if its curvatures $k_1(s), \ldots, k_{n-1}(s)$ are constant functions which will be denoted $\kappa_1,\ldots,\kappa_{n-1}$.
\end{definition}

\begin{remark} In order to make easier the comparison of our results with those which are available in the literature, in the case of $2$-Frenet helices we shall write $\{T,N\}$ instead of $\{F_1,F_2\}$ and $\kappa$ for $\kappa_1$. 

As for $3$-Frenet helices, we write $\{T,N,B\}$ for $\{F_1,F_2,F_3\}$, and $\kappa,\tau$ for $\kappa_1,\kappa_2$. 
%

Moreover, we point out that we adopt the following notation and sign convention for the Riemannian curvature tensor field $R$ of the ambient space $M$:
\begin{equation}\notag
 {R}(X,Y)Z={\nabla}_{X}{\nabla}_{Y}Z
-{\nabla}_{Y}{\nabla}_{X}Z-{\nabla}_{[X,Y]}Z\,, \quad  {R}(X,Y,Z,W)=\langle {R}(X,Y)W,Z \rangle \,.
\end{equation}
\end{remark}
\vspace{2mm}

The Chen-Maeta conjecture states that any $r$-harmonic submanifold of $\R^n$ is minimal. As for curves, Maeta (see \cite{MR2869168}) proved that the conjecture is true.

The study of triharmonic helices in space forms was initiated by Maeta (see \cite{MR3007953}) and, more recently, Branding carried out a complete study of $r$-harmonic Frenet helices of order $3$ into $3$-dimensional space forms (see \cite{MR4542687}). More precisely, he described explicit examples in $\s^3$ and proved non-existence when the curvature of the ambient space form is non-positive. 

This last result supports the idea that, in general, it is difficult to find examples of proper $r$-harmonic submanifolds in an ambient space whose Riemannian curvature tensor field is non-positive. Despite this, the non-existence of proper biharmonic submanifolds into a non-positively curved ambient space, known as generalized Chen's conjecture, was first proved to be false in \cite{MR2975260}. Moreover, other examples were also constructed in a conformally flat, non-positively curved ambient space (see \cite{MR3095129}).

An investigation of triharmonic $2$-Frenet helices in surfaces and triharmonic $3$-Frenet helices in $3$-dimensional homogeneous spaces with a $4$-dimensional group of isometries was carried out in \cite{MR4308322}. More recently, triharmonic $3$-Frenet helices in the $3$-dimensional solvable Lie group Sol$_3$ were fully classified in \cite{Suceava}. 
\vspace{2mm}

In this paper we shall make some progress concerning $r$-harmonic $n$-Frenet helices, $n=2,3$, into the $3$-dimensional solvable Lie group Sol$_3$ and into Bianchi-Cartan-Vranceanu spaces. Our results in this context will be described and proved in Sections\link\ref{Sec-Sol} and \ref{Sec-BCV} respectively. 
\vspace{2mm}

As for general results, our first contribution is
\begin{theorem}\label{Th-2-Frenet-general}
Let $\gamma:I \to M$ be a Frenet helix of order $2$ and $r \geq 2$. Then
\[
\tau_r(\gamma)=-\kappa^{2r-3}\Big[\kappa^2\,N+(r-1) R(T,N)T \Big ]\,.
\] 
\end{theorem}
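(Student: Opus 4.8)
The plan is to compute the iterated covariant derivatives $\nabla^k_T T$ for a $2$-Frenet helix directly from the Frenet equations \eqref{Frenet-field-general}, and then substitute into the general $r$-harmonicity formula \eqref{r-harmonicity-curves}. Since $\kappa$ is constant, the Frenet system reduces to $\nabla_T T = \kappa N$ and $\nabla_T N = -\kappa T$, so the pair $\{T,N\}$ is parallel-transported within its own span up to the constant rotation matrix $\begin{pmatrix} 0 & -\kappa \\ \kappa & 0\end{pmatrix}$. A routine induction then gives, for $j \geq 1$,
\[
\nabla^{2j-1}_T T = (-1)^{j-1}\kappa^{2j-1}\,N\,, \qquad \nabla^{2j}_T T = (-1)^{j}\kappa^{2j}\,T\,,
\]
which I would verify by noting that applying $\nabla_T$ sends $\kappa^{2j-1}N \mapsto -\kappa^{2j}T$ and $\kappa^{2j}T \mapsto \kappa^{2j+1}N$, matching the sign pattern. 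In particular $\nabla^{2r-1}_T T = (-1)^{r-1}\kappa^{2r-1}N$, which accounts for the first term of $\tau_r(\gamma)$, namely $-\kappa^{2r-3}\cdot\kappa^2 N$ after checking the sign $(-1)^{r-1} = -(-1)^{r-2}$ — here I will need to be a little careful to keep the global sign $-\kappa^{2r-3}$ consistent with the curvature term.

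Next I would treat the curvature sum $\sum_{\ell=0}^{r-2}(-1)^\ell R\!\left(\nabla^{2r-3-\ell}_T T, \nabla^{\ell}_T T\right)T$. Using the explicit formulas above, each vector $\nabla^{k}_T T$ is a constant multiple of either $T$ or $N$; specifically $\nabla^{2r-3-\ell}_T T$ and $\nabla^{\ell}_T T$ are multiples of $T$ and $N$ in one order or the other, depending on the parity of $\ell$. When $\ell$ is even, $\nabla^\ell_T T$ is a multiple of $T$ and $\nabla^{2r-3-\ell}_T T$ (odd index) is a multiple of $N$, so $R(\nabla^{2r-3-\ell}_T T,\nabla^\ell_T T)T$ is a multiple of $R(N,T)T = -R(T,N)T$; when $\ell$ is odd the roles swap and we get a multiple of $R(T,N)T$ directly. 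The scalar coefficient in every summand is, up to sign, $\kappa^{2r-3-\ell}\cdot\kappa^{\ell} = \kappa^{2r-3}$, so $\kappa^{2r-3}$ factors out of the whole sum. The combinatorial heart of the argument is then to check that, after accounting for the three sign sources — the explicit $(-1)^\ell$ in the sum, the sign from the formula for $\nabla^k_T T$, and the antisymmetry $R(N,T)T=-R(T,N)T$ — every one of the $r-1$ terms contributes the \emph{same} multiple of $R(T,N)T$, so the sum collapses to $(r-1)\,(\pm\kappa^{2r-3})\,R(T,N)T$.

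The step I expect to be the main obstacle is precisely this sign bookkeeping: one must verify that the product of the three signs in the $\ell$-th summand is independent of $\ell$. A clean way to organize it is to write $\nabla^{2r-3-\ell}_T T = \varepsilon_{2r-3-\ell}\,\kappa^{2r-3-\ell}\,X_{2r-3-\ell}$ and $\nabla^\ell_T T = \varepsilon_\ell\,\kappa^\ell\,X_\ell$ where $X_k \in \{T,N\}$ and $\varepsilon_k \in \{\pm1\}$ are read off from the induction, reduce $R(X_{2r-3-\ell},X_\ell)T$ to $\pm R(T,N)T$ via antisymmetry, and show the total sign $(-1)^\ell\,\varepsilon_{2r-3-\ell}\,\varepsilon_\ell\,(\text{antisymmetry sign})$ is constant in $\ell$ by a short parity computation — the key identity being that $\varepsilon_k$ depends on $\lceil k/2\rceil$ mod $2$, so the dependence on $\ell$ in $\varepsilon_{2r-3-\ell}\varepsilon_\ell$ exactly cancels the dependence coming from $(-1)^\ell$ together with the antisymmetry swap. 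Once that is pinned down, the two pieces assemble into $\tau_r(\gamma) = -\kappa^{2r-3}\big[\kappa^2 N + (r-1)R(T,N)T\big]$, and matching the overall sign against the first term fixes any remaining ambiguity.
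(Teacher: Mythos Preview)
Your proposal is correct and follows essentially the same route as the paper: both compute the closed form $\nabla_T^{2\ell}T=(-1)^\ell\kappa^{2\ell}T$, $\nabla_T^{2\ell+1}T=(-1)^\ell\kappa^{2\ell+1}N$ by induction, then reduce every curvature summand to a constant multiple of $\kappa^{2r-3}R(T,N)T$ via a parity analysis of the index $\ell$. The only cosmetic difference is that the paper organizes the sign bookkeeping by first splitting $r=2s$ versus $r=2s+1$ and then separating even and odd $\ell$ explicitly, whereas you handle all $r$ at once with the $\varepsilon_k$ notation; the substance is identical.
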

\begin{corollary}\label{Cor-2-Frenet-general} Let $\gamma:I \to M$ be a Frenet helix of order $2$ and $r \geq 2$. If
\[
\langle R(T,N)T,W \rangle =0 \quad \forall\, W \in \big({\rm Span}\{T,N\}\big)^\perp \,,
\]
then $\gamma$ is proper $r$-harmonic if and only if
\begin{equation}\label{eq-r-harmonicity-2-helix-general}
\kappa^2=(r-1) R(N,T,N,T) \,.
\end{equation}
\end{corollary}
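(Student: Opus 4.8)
The plan is to deduce the corollary directly from Theorem~\ref{Th-2-Frenet-general}. Since $\gamma$ is a $2$-Frenet helix with $\kappa>0$, the factor $-\kappa^{2r-3}$ is never zero, so the $r$-tension field vanishes if and only if the bracketed expression does, i.e. $\kappa^2\,N+(r-1)R(T,N)T=0$. Because $\{T,N\}$ is orthonormal and $N$ is a unit vector orthogonal to $T$, I would first observe that $\langle \kappa^2 N+(r-1)R(T,N)T,T\rangle=(r-1)\langle R(T,N)T,T\rangle$, which is automatically zero by the antisymmetry of the curvature operator (the term $\langle R(T,N)T,T\rangle=R(T,T,N,T)=0$). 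Hence the $T$-component never contributes an obstruction.

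Next I would project onto $N$: $\langle \kappa^2 N+(r-1)R(T,N)T,N\rangle=\kappa^2+(r-1)\langle R(T,N)T,N\rangle$. Using the stated sign convention $R(X,Y,Z,W)=\langle R(X,Y)W,Z\rangle$, one has $\langle R(T,N)T,N\rangle=R(N,T,T,N)$, and by the standard symmetries $R(N,T,T,N)=-R(N,T,N,T)$. Therefore the $N$-component of the bracket equals $\kappa^2-(r-1)R(N,T,N,T)$, and requiring it to vanish gives precisely equation~\eqref{eq-r-harmonicity-2-helix-general}. (I would double-check the sign bookkeeping here against the paper's convention, since this is the one place an error could creep in.)

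Finally I would handle the remaining components. The hypothesis $\langle R(T,N)T,W\rangle=0$ for all $W\perp\mathrm{Span}\{T,N\}$ says exactly that $R(T,N)T$ has no component outside the plane spanned by $T$ and $N$; combined with the vanishing of the $T$-component shown above, this means $R(T,N)T=\langle R(T,N)T,N\rangle N$ is a multiple of $N$. Consequently the whole bracket $\kappa^2 N+(r-1)R(T,N)T$ is a multiple of $N$, and it vanishes if and only if its $N$-component does, i.e. if and only if~\eqref{eq-r-harmonicity-2-helix-general} holds. Since $\kappa\neq 0$, in that case $\gamma$ is $r$-harmonic and non-geodesic, hence proper $r$-harmonic; conversely if $\gamma$ is proper $r$-harmonic then $\tau_r(\gamma)=0$ forces~\eqref{eq-r-harmonicity-2-helix-general}. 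This completes the argument.

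The only real subtlety — hardly an obstacle — is keeping the curvature-tensor symmetries and the paper's sign convention consistent when passing from $\langle R(T,N)T,N\rangle$ to $R(N,T,N,T)$; everything else is an immediate consequence of Theorem~\ref{Th-2-Frenet-general} and the orthonormality of the Frenet frame.
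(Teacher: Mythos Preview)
Your proof is correct and follows essentially the same approach as the paper: the authors simply note that $\langle\tau_r(\gamma),T\rangle=0$ and compute $\langle\tau_r(\gamma),N\rangle$, which is exactly what you do in more detail (using the hypothesis to kill the components orthogonal to $\mathrm{Span}\{T,N\}$). One tiny slip: with the paper's convention $\langle R(T,N)T,T\rangle=R(T,N,T,T)$, not $R(T,T,N,T)$, but either way it vanishes by skew-symmetry, so the argument is unaffected.
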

\vspace{2mm}

Next, we examine the case of $3$-Frenet helices. To this purpose, let us introduce the following quantities:
\begin{equation}\label{def-A1A2A3}
A_1= \kappa^2 (r-1)+\tau ^2\,,\quad A_2= -   \kappa \,\tau \,(r - 2)\,,\quad A_3= -\left(\kappa^2+\tau ^2\right)^2\,.
\end{equation}
Then we can state our result:
\begin{theorem}\label{Th-3-Frenet-general}
Let $\gamma:I \to M$ be a Frenet helix of order $3$ and $r \geq 2$. Then
\[
\tau_r(\gamma)=c\,\Big \{A_3 N+A_1 R(N,T)T+A_2R(N,B)T \Big \}\,,
\]
where $c=(-1)^r \kappa \left(\kappa^2+\tau ^2\right)^{r-3}$ and $A_i,\,i=1,2,3,$ are defined in \eqref{def-A1A2A3}.
\end{theorem}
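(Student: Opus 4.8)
The plan is to compute $\tau_r(\gamma)$ directly from the defining formula \eqref{r-harmonicity-curves} by first understanding the iterated covariant derivatives $\nabla^k_T T$ for a $3$-Frenet helix. Since the curvatures $\kappa,\tau$ are constant, differentiating the Frenet system \eqref{Frenet-field-general} repeatedly produces expressions of the form $\nabla^k_T T = a_k F_1 + b_k F_2 + c_k F_3$ whose coefficients satisfy a linear recursion with constant coefficients; in fact, since $\nabla_T$ acts on $\mathrm{Span}\{F_1,F_2,F_3\}$ as the constant skew-symmetric matrix with entries $\kappa,\tau$, one has $\nabla^k_T T$ given by applying the $k$-th power of that matrix to the vector $(1,0,0)$. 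The eigenvalues of this matrix are $0$ and $\pm i\sqrt{\kappa^2+\tau^2}$, so the coefficients are (up to the obvious powers of $\sqrt{\kappa^2+\tau^2}$) linear combinations of $1$, $\cos$ and $\sin$ evaluated at integer multiples — but because we only ever need $\nabla^k_T T$ in the combination appearing in $\tau_r$, it is cleaner to keep track of the three coefficient sequences directly via the recursion $a_{k+1}=-\kappa b_k$, $b_{k+1}=\kappa a_k-\tau c_k$, $c_{k+1}=\tau b_k$.

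The second step is to assemble the leading term $\nabla^{2r-1}_T T$ and the curvature sum $\sum_{\ell=0}^{r-2}(-1)^\ell R(\nabla^{2r-3-\ell}_T T,\nabla^\ell_T T)T$. For the leading term one needs the coefficient of $F_2=N$ in $\nabla^{2r-1}_T T$ (the $F_1$ and $F_3$ components will have to cancel against nothing — one should verify they vanish, which follows from parity/structure of the recursion); I expect this to yield precisely the $A_3 N$ contribution, i.e. the coefficient $(-1)^r\kappa(\kappa^2+\tau^2)^{r-3}\cdot\big(-(\kappa^2+\tau^2)^2\big)$, matching $c\,A_3$. For the curvature sum, each summand is $R$ applied to a pair of vectors each of which is a combination of $N$ and the other frame vectors; using the antisymmetry $R(X,X)\cdot=0$ and bilinearity, only the "cross" terms $R(N,T)T$ and $R(N,B)T$ survive (the $R(T,B)T$ term should also appear a priori and must be shown to cancel, or be absorbed — this is a point to watch). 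Collecting the scalar coefficients multiplying $R(N,T)T$ and $R(N,B)T$ over all $\ell$ should give trigonometric-type sums that telescope or sum in closed form to $c\,A_1$ and $c\,A_2$ respectively.

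Concretely, I would organize the computation around the substitution $\kappa=\rho\cos\theta$, $\tau=\rho\sin\theta$ where $\rho^2=\kappa^2+\tau^2$, so that the coefficient sequences become genuinely $\rho^k$ times values of sines and cosines; then the sums $\sum_\ell(-1)^\ell(\cdots)$ become geometric-like sums in $e^{\pm i\,(\text{something})}$ that can be evaluated in closed form, and one converts back to $\kappa,\tau$ at the end. The expressions $A_1=\kappa^2(r-1)+\tau^2$ and $A_2=-\kappa\tau(r-2)$ have exactly the shape one expects from such a sum: a "linear in $r$" piece coming from the $(-1)^\ell$ making roughly half the $r-1$ terms add constructively, plus a bounded correction. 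I would double-check the $r=3$ case against the known classification in \cite{Suceava} and the $\tau\to 0$ limit against Theorem~\ref{Th-2-Frenet-general} (with the caveat that a $2$-Frenet helix is not literally the $\tau=0$ case of a $3$-Frenet helix since $\tau$ is required positive, but the formulas should be consistent), and also sanity-check against Branding's space-form computations.

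The main obstacle will be the bookkeeping in the curvature sum: one must carefully expand $R(\nabla^{2r-3-\ell}_T T,\nabla^\ell_T T)T$ as a sum of nine terms $R(F_i,F_j)T$ weighted by products of coefficient sequences, show that the $R(F_1,F_3)T=R(T,B)T$ contributions cancel across the alternating sum (this is presumably where the constancy of the curvatures and the specific index structure $2r-3-\ell$ versus $\ell$ conspire), and then evaluate the two surviving coefficient sums in closed form without sign errors. Everything else is a mechanical, if lengthy, application of the Frenet equations and the linearity of $R$.
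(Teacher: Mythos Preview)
Your approach is fundamentally the same as the paper's---compute $\nabla_T^k T$ in closed form and substitute---but you are missing the one structural observation that makes the computation clean, and which resolves both of the ``points to watch'' you flag. From your recursion $a_{k+1}=-\kappa b_k$, $b_{k+1}=\kappa a_k-\tau c_k$, $c_{k+1}=\tau b_k$ with $(a_0,b_0,c_0)=(1,0,0)$ it follows by a one-line induction that $b_k=0$ for $k$ even and $a_k=c_k=0$ for $k$ odd: in other words $\nabla_T^{2\ell}T\in\mathrm{Span}\{T,B\}$ and $\nabla_T^{2\ell+1}T\in\mathrm{Span}\{N\}$. This is exactly Branding's Lemma~2.1, which the paper cites and which gives the explicit closed forms $\nabla_T^{2\ell}T=(-1)^\ell\kappa^2(\kappa^2+\tau^2)^{\ell-1}T+(-1)^{\ell-1}\kappa\tau(\kappa^2+\tau^2)^{\ell-1}B$ (for $\ell\geq1$) and $\nabla_T^{2\ell+1}T=(-1)^\ell\kappa(\kappa^2+\tau^2)^{\ell}N$. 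With this in hand: (i) the $T$ and $B$ components of $\nabla_T^{2r-1}T$ vanish on the nose, not by cancellation; and (ii) since the two indices $2r-3-\ell$ and $\ell$ in the curvature sum always have opposite parity, every summand is of the form $R(\alpha T+\beta B,\,\gamma N)T$ or $R(\gamma N,\,\alpha T+\beta B)T$, so $R(T,B)T$ simply never appears. There is no cancellation to check. The paper then splits the sum into the $\ell$-even and $\ell$-odd pieces and reads off the coefficients of $R(N,T)T$ and $R(N,B)T$ as finite sums of the products $f\cdot h$ and $f\cdot g$, which are geometric in $(\kappa^2+\tau^2)$ and sum immediately to $c\,A_1$ and $c\,A_2$. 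Your trigonometric substitution would recover the same answer but is an unnecessary detour once you have the parity structure and the closed forms.
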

\begin{corollary}\label{Cor-3-Frenet-general}
Let $\gamma:I \to M$ be a Frenet helix of order $3$ and $r \geq 2$. Assume that
\[
\langle R(N,T)T,W \rangle =0 \,, \,\langle R(N,B)T,W \rangle =0\quad \forall\, W \in \big ({\rm Span}\{T,N,B\}\big)^\perp \,.
\]
Then $\gamma$ is proper $r$-harmonic if and only if the following two equations hold:
\begin{equation}\label{eq-r-harmonicity-3-helix-general}
\begin{array}{ll}
{\rm (i)}&A_1 R(N,T,N,T)+A_2 R(N,B,N,T)+A_3=0 \\
&\\
{\rm (ii)}&A_1 R(N,T,B,T)+A_2 R(N,B,B,T)=0\,.
\end{array} 
\end{equation}
\end{corollary}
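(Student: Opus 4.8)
The plan is to read off the characterization directly from Theorem~\ref{Th-3-Frenet-general} by expanding the $r$-tension field in the Frenet frame $\{T,N,B\}$. First I would observe that the scalar factor $c=(-1)^r\kappa(\kappa^2+\tau^2)^{r-3}$ is nowhere zero: by Definition~\ref{def-curves} the first curvature of a Frenet curve of order $3$ satisfies $\kappa=\kappa_1>0$, whence $c\neq0$ and $\kappa^2+\tau^2>0$. Thus, by Theorem~\ref{Th-3-Frenet-general}, $\tau_r(\gamma)=0$ if and only if $A_3 N+A_1 R(N,T)T+A_2 R(N,B)T=0$. For the same reason $\nabla_TT=\kappa\,N\neq0$, so a Frenet helix of order $3$ is never a geodesic; hence, for such curves, ``$r$-harmonic'' and ``proper $r$-harmonic'' coincide, and it suffices to analyse this single vector equation.

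Next I would use the hypothesis: $R(N,T)T$ and $R(N,B)T$ are orthogonal to every $W\in(\mathrm{Span}\{T,N,B\})^\perp$, i.e.\ they lie in $\mathrm{Span}\{T,N,B\}$; since $A_3 N$ does too, the vanishing of the vector $A_3 N+A_1 R(N,T)T+A_2 R(N,B)T$ is equivalent to the simultaneous vanishing of its $T$-, $N$-, and $B$-components with respect to the orthonormal frame $\{T,N,B\}$. The $T$-component vanishes automatically, because the antisymmetry of the curvature tensor in its last two arguments gives $\langle R(N,T)T,T\rangle=R(N,T,T,T)=0$ and $\langle R(N,B)T,T\rangle=R(N,B,T,T)=0$, while $\langle N,T\rangle=0$. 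The $N$-component equals $A_1\langle R(N,T)T,N\rangle+A_2\langle R(N,B)T,N\rangle+A_3=A_1 R(N,T,N,T)+A_2 R(N,B,N,T)+A_3$, which is equation~(i) of \eqref{eq-r-harmonicity-3-helix-general}; the $B$-component equals $A_1\langle R(N,T)T,B\rangle+A_2\langle R(N,B)T,B\rangle=A_1 R(N,T,B,T)+A_2 R(N,B,B,T)$, which is equation~(ii). This proves the corollary.

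There is no genuine obstacle once Theorem~\ref{Th-3-Frenet-general} is in hand; the delicate points are merely bookkeeping. One must keep track of the slot/sign convention $R(X,Y,Z,W)=\langle R(X,Y)W,Z\rangle$, so that, for instance, $\langle R(N,B)T,N\rangle$ is correctly identified with $R(N,B,N,T)$ and not its negative; one must check that the $T$-component is forced to vanish, so that it contributes no further equation; and one must note that the non-geodesic requirement is automatic for $3$-Frenet helices, so that every $r$-harmonic $3$-Frenet helix is automatically proper. In the geometrically most relevant situation $m=\dim M=3$ the orthogonality hypothesis is vacuous, and the statement reduces to conditions~(i)--(ii) alone.
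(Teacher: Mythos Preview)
Your argument is correct and follows the same route as the paper: use Theorem~\ref{Th-3-Frenet-general}, note that $\langle \tau_r(\gamma),T\rangle=0$ automatically, and then read off equations~(i) and~(ii) as the $N$- and $B$-components of $\tau_r(\gamma)$ under the hypothesis that the curvature terms lie in ${\rm Span}\{T,N,B\}$. Your additional remarks on why $c\neq0$ and why ``proper $r$-harmonic'' coincides with ``$r$-harmonic'' for $3$-Frenet helices are correct and make explicit points the paper leaves tacit.
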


\begin{remark}\quad
\begin{itemize}
\item[(i)] Putting together Theorem\link\ref{Th-2-Frenet-general} and Corollary\link\ref{Cor-2-Frenet-general} we conclude that condition \eqref{eq-r-harmonicity-2-helix-general} is a necessary condition for a $2$-Frenet helix to be $r$-harmonic. This condition is also sufficient if one proves that $\tau_r(\gamma)$ belongs to ${\rm Span}  \{T,N\}$. For instance, this surely happens if the ambient space is a surface or any $m$-dimensional space form, $m \geq 2$. 
Similarly, we deduce from Theorem\link\ref{Th-3-Frenet-general} and Corollary\link\ref{Cor-3-Frenet-general} that the two equations in \eqref{eq-r-harmonicity-3-helix-general} represent a necessary condition for the $r$-harmonicity of a $3$-Frenet helix, and this condition is also sufficient if one proves that $\tau_r(\gamma)$ belongs to ${\rm Span} \{T,N,B\}$. For instance, this is automatic if the ambient space has dimension $3$ or it is an $m$-dimensional space form, $m \geq 3$.

\item[(ii)]  Theorems\link\ref{Th-2-Frenet-general} and \ref{Th-3-Frenet-general}  imply that the component of $\tau_r(\gamma)$ along $T$ vanishes. Therefore, these helices are always $r$-conservative.

\item[(iii)]  In the special case that the ambient space $M$ is a $3$-dimensional space form of curvature $K$ it is easy to check that equation \eqref{eq-r-harmonicity-3-helix-general}(ii) always holds, while \eqref{eq-r-harmonicity-3-helix-general}(i) is equivalent to
\[
\left(\kappa^2+\tau ^2\right)^2=K \left((r-1)\kappa^2+\tau ^2\right)\,.
\]
Then we have recovered Theorem 1.1 of \cite{MR4542687} as a special case of our Theorem\link\ref{Th-3-Frenet-general}--Corollary\link\ref{Cor-3-Frenet-general}.
\end{itemize}
\end{remark}

Our next result suggests that helices and, more generally, curves of high order cannot be $r$-harmonic for small values of $r$. More precisely, we have:
\begin{theorem}\label{Th-non-existence-r-curves}
Let $\gamma$ be a full $n$-Frenet curve in $\s^m$, $m\geq n \geq 2$. If $ n \geq 2r$, then $\gamma$ cannot be $r$-harmonic.
\end{theorem}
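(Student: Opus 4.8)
The plan is to exploit the explicit form of the curvature operator of $\s^m$ together with the Frenet equations \eqref{Frenet-field-general} in order to isolate the component of $\tau_r(\gamma)$ along the $2r$-th Frenet vector field $F_{2r}$, and to show that this component never vanishes when $n\ge 2r$ and the curve is full. Recall that, with the sign convention adopted in this paper, the curvature operator of $\s^m$ is $R(X,Y)Z=\langle Y,Z\rangle X-\langle X,Z\rangle Y$.

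The first step is a bookkeeping fact about the iterated covariant derivatives of $T$: by induction on $k$, using \eqref{Frenet-field-general}, one shows that for every $0\le k\le n-1$
\[
\nabla^k_T T=\sum_{j=1}^{k+1} a^{(k)}_j\,F_j\,,\qquad a^{(k)}_{k+1}=\prod_{i=1}^{k}k_i\,,
\]
the inductive step being immediate because the only way to produce $F_{k+2}$ from $\nabla_T(\nabla^k_T T)$ is through the term $a^{(k)}_{k+1}\,\nabla_T F_{k+1}=a^{(k)}_{k+1}\,(-k_k F_k+k_{k+1}F_{k+2})$. Since the hypothesis $n\ge 2r$ forces $2r-1\le n-1$, this applies in particular to the leading term $\nabla^{2r-1}_T T$ of \eqref{r-harmonicity-curves} and gives $\langle\nabla^{2r-1}_T T,F_{2r}\rangle=\prod_{i=1}^{2r-1}k_i$.

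The second step is to check that the curvature summands of \eqref{r-harmonicity-curves} contribute nothing in the $F_{2r}$-direction. Using the curvature operator of $\s^m$,
\[
R\!\left(\nabla^{2r-3-\ell}_T T,\nabla^{\ell}_T T\right)T=\langle\nabla^{\ell}_T T,T\rangle\,\nabla^{2r-3-\ell}_T T-\langle\nabla^{2r-3-\ell}_T T,T\rangle\,\nabla^{\ell}_T T\,,
\]
and by the first step the right-hand side lies in ${\rm Span}\{F_1,\dots,F_{2r-2-\ell}\}+{\rm Span}\{F_1,\dots,F_{\ell+1}\}\subseteq {\rm Span}\{F_1,\dots,F_{2r-2}\}$ for every $\ell=0,\dots,r-2$. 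Consequently $\langle\tau_r(\gamma),F_{2r}\rangle=\prod_{i=1}^{2r-1}k_i$, which is nowhere zero because $\gamma$ is full (so that $k_1,\dots,k_{n-1}$ never vanish). Hence $\tau_r(\gamma)\ne 0$ and $\gamma$ cannot be $r$-harmonic.

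I do not expect a serious obstacle: the argument rests on $\s^m$ having the simplest possible curvature tensor, which keeps all the curvature corrections in \eqref{r-harmonicity-curves} inside the span of the first $2r-2$ Frenet vectors, while the leading term $\nabla^{2r-1}_T T$ reaches $F_{2r}$. The only point requiring care is the index bookkeeping, which is precisely where the hypothesis $n\ge 2r$ enters: it guarantees that $F_{2r}$ genuinely exists and that the top-order coefficient computed in the first step is the full product $\prod_{i=1}^{2r-1}k_i$; when $n=2r-1$ the term $\nabla^{2r-1}_T T$ already folds back into ${\rm Span}\{F_1,\dots,F_n\}$ and this particular argument no longer applies.
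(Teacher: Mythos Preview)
Your proof is correct and follows essentially the same approach as the paper: both arguments compute by induction that $\nabla^{k}_T T$ lies in ${\rm Span}\{F_1,\dots,F_{k+1}\}$ with top coefficient $\prod_{i=1}^{k}k_i$, observe that the curvature terms of \eqref{r-harmonicity-curves} in $\s^m$ stay in ${\rm Span}\{F_1,\dots,F_{2r-2}\}$, and conclude by reading off the nonvanishing $F_{2r}$-component of $\tau_r(\gamma)$. Your write-up is slightly more explicit about why the curvature summands cannot reach $F_{2r}$, but the idea is identical.
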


In the final section of this paper we shall prove that a stronger non-existence statement can be obtained if we assume that the curve is a helix (see Section\link\ref{n-FrenethelicesinSn}).

\begin{remark}
For the sake of completeness, here we point out that the choice of the most suitable definition of a general helix in a Riemannian geometric context is a delicate matter and may depend on the specific situation. Definition\link\ref{def-helices} above seemed to us the most appropriate for our approach to the study of $r$-harmonic curves. By contrast, the following geometrically interesting definition has been used in a significant paper of Barrios (see \cite{MR1363411}) along the lines of previous work by Langer and Singer (see \cite{MR772124}).
\begin{definition}\label{Def-Barros} \cite{MR1363411} A curve $\gamma(s)$ into a $3$-dimensional space form will be called a \textit{general helix} if there exists a Killing vector field $\mathcal{V }(s)$ with constant length along $\gamma$ and such that the angle between $\mathcal{V }$ and the unit tangent $T$ is a non-zero constant along $\gamma$. We will say that $\mathcal{V }$ is an axis of the general helix.
\end{definition}
In particular, this definition does not imply that the curvatures of the helix are constant, and this would boost the analytical difficulties in our context. The interested reader can find some further comments in this direction in the last section of our recent paper \cite{Suceava}.
\end{remark}

Our paper is organized as follows. 

The results stated in this introduction will be proved in Section\link\ref{Sec-proofs}. The classification and existence results concerning $r$-harmonic $n$-Frenet helices, $n=2,3$, in the $3$-dimensional solvable Lie group Sol$_3$ and in BCV-spaces will be stated and proved in Sections\link\ref{Sec-Sol} and \ref{Sec-BCV} respectively.

Next, in Section\link\ref{n-FrenethelicesinSn} we shall study full, $r$-harmonic $n$-Frenet helices in $\s^m$ and prove some non-existence results. In order to overcome some technical difficulties in this context we shall make use of some  properties of Groebner bases. 

Finally, at the end of the paper we have added a related appendix where we discuss some conditions which imply that a proper triharmonic curve has constant geodesic curvature.

\section{Proof of the results stated in the introduction}\label{Sec-proofs}
\begin{proof}[Proof of Theorem\link\ref{Th-2-Frenet-general}] First, an elementary argument using
\[
\left \{
\begin{array}{ccl}
\nabla_T T&=& \kappa \,N \\
\nabla_T N&=&-\kappa \,T
\end{array} \right .
\]
shows that, for all $\ell \geq 0$,
\begin{equation}\label{formula-nablaelleTT-2-Frenet}
\left \{
\begin{array}{lll}
\nabla_T^{2\ell} T&=& (-1)^{\ell}\kappa^{2\ell} \,T \\
&&\\
\nabla_T^{2\ell+1} T&=& (-1)^{\ell}\kappa^{2\ell+1} \,N\,.
\end{array} \right .
\end{equation}
Next, let us assume that $r=2s$. Then, in order to be able to use \eqref{formula-nablaelleTT-2-Frenet}, it is convenient to rewrite the $r$ tension field  $\tau_r(\gamma)$ given in \eqref{r-harmonicity-curves} splitting odd and even terms as follows:
\begin{equation}\label{split-taur}
\tau_{2s}(\gamma)=\nabla_T^{4s-1} T + \sum_{j=0}^{s-1}R\left(\nabla_T^{2(2s-2-j)+1} T,\nabla_T^{2j} T \right )T-\sum_{j=0}^{s-2}R\left(\nabla_T^{2(2s-2-j)} T,\nabla_T^{2j+1} T \right )T \, .
\end{equation}
Next, inserting \eqref{formula-nablaelleTT-2-Frenet} into this expression and simplifying we deduce that
\[
\tau_{2s}(\gamma)=-\kappa^{2r-1}N-\Big [\sum_{j=0}^{s-1} \kappa^{2r-3}+\sum_{j=0}^{s-2}\kappa^{2r-3}\Big] R(T,N)T =-\kappa^{2r-3}\Big[\kappa^2\,N+(r-1) R(T,N)T \Big ]
\]

and so the proof is completed in this case. When $r=2s+1$ the proof is analogous and so we omit the details.
\end{proof}
\begin{proof}[Proof of Corollary\link\ref{Cor-2-Frenet-general}]
It suffices to observe that $\langle \tau_r(\gamma),T\rangle =0$ and compute $\langle \tau_r(\gamma),N\rangle $.
\end{proof}
\begin{proof}[Proof of Theorem\link\ref{Th-3-Frenet-general}] First, we recall a lemma for $3$-Frenet helices which was proved by Branding (see Lemma\link2.1 of \cite{MR4542687}):
\begin{lemma}\label{lemma-Branding-nablaelle} Let
\[
\begin{array}{lll}
h(\ell)=(-1)^\ell \kappa^2 \big (\kappa^2 + \tau^2 \big )^{\ell-1} &{\rm if} &\ell \geq 1\,, h(0)=1;\\
g(\ell)=(-1)^{\ell-1} \kappa\,\tau \big (\kappa^2 + \tau^2 \big )^{\ell-1}&{\rm if} &\ell \geq 1\,, g(0)=0;\\ f(\ell)= (-1)^{\ell}  \kappa \big (\kappa^2 + \tau^2 \big )^{\ell}&{\rm if} &\ell \geq 0 \,.
\end{array}
\]
Then, for all $\ell \geq 0$, we have:
\begin{equation}\label{formula-nablaelleTT-3-Frenet}
\begin{cases}
\nabla_T^{2\ell} T= h(\ell) \,T+ g(\ell) \,B \\
\nabla_T^{2\ell+1} T= f(\ell)\,N\,.
\end{cases}
\end{equation}
\end{lemma}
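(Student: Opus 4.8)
The plan is a routine induction on $\ell\ge 0$ using only the Frenet equations of a $3$-Frenet helix. Writing $\{T,N,B\}$ for the Frenet frame and $\kappa=\kappa_1$, $\tau=\kappa_2$ for the (constant) curvatures, these equations read
\[
\nabla_T T=\kappa\,N\,,\qquad \nabla_T N=-\kappa\,T+\tau\,B\,,\qquad \nabla_T B=-\tau\,N\,.
\]
The base case $\ell=0$ is immediate from $h(0)=1$, $g(0)=0$, $f(0)=\kappa$: indeed $\nabla_T^{0}T=T=h(0)\,T+g(0)\,B$ and $\nabla_T^{1}T=\nabla_T T=\kappa N=f(0)\,N$.

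For the inductive step I would assume \eqref{formula-nablaelleTT-3-Frenet} for a given $\ell$ and differentiate twice along $T$. From $\nabla_T^{2\ell}T=h(\ell)\,T+g(\ell)\,B$ and the Frenet equations,
\[
\nabla_T^{2\ell+1}T=h(\ell)\,\nabla_T T+g(\ell)\,\nabla_T B=\bigl(\kappa\,h(\ell)-\tau\,g(\ell)\bigr)\,N\,,
\]
and a one-line check gives $\kappa\,h(\ell)-\tau\,g(\ell)=f(\ell)$: for $\ell=0$ this reads $\kappa\cdot 1-\tau\cdot 0=\kappa=f(0)$, while for $\ell\ge 1$ it follows after factoring $(-1)^{\ell}(\kappa^{2}+\tau^{2})^{\ell-1}$ and using $\kappa\cdot\kappa^{2}+\tau\cdot\tau^{2}=\kappa(\kappa^{2}+\tau^{2})$. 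Next, from $\nabla_T^{2\ell+1}T=f(\ell)\,N$,
\[
\nabla_T^{2\ell+2}T=f(\ell)\,\nabla_T N=-\kappa\,f(\ell)\,T+\tau\,f(\ell)\,B\,,
\]
and one verifies directly that $-\kappa\,f(\ell)=h(\ell+1)$ and $\tau\,f(\ell)=g(\ell+1)$, where the general formulas for $h$ and $g$ apply since $\ell+1\ge 1$. This closes the induction and yields \eqref{formula-nablaelleTT-3-Frenet}.

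There is essentially no obstacle: the argument is a direct computation. The only point deserving a little care is that $h$ and $g$ are defined by a case split at $\ell=0$, so the identity $\kappa\,h(\ell)-\tau\,g(\ell)=f(\ell)$ has to be checked separately for $\ell=0$ before invoking the closed formulas. Since the statement is due to Branding, one could alternatively just cite Lemma~2.1 of \cite{MR4542687}, but the inductive argument above is short and self-contained.
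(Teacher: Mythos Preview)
Your induction argument is correct and complete. The paper itself does not prove this lemma: it simply quotes it from Branding \cite{MR4542687} (Lemma~2.1), so your self-contained argument actually goes beyond what the paper provides. One minor slip: in the verification of $\kappa\,h(\ell)-\tau\,g(\ell)=f(\ell)$ for $\ell\ge 1$, the algebraic identity you invoke should read $\kappa\cdot\kappa^{2}+\kappa\cdot\tau^{2}=\kappa(\kappa^{2}+\tau^{2})$ (after absorbing the sign coming from $g(\ell)$, the second summand is $\kappa\tau^{2}$, not $\tau^{3}$); this is a harmless typo and the computation is of course correct.
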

Next, let $r=2s$. We use again the split \eqref{split-taur}. More precisely, inserting \eqref{formula-nablaelleTT-3-Frenet} we obtain:
\begin{eqnarray*}
\sum_{j=0}^{s-1}R\left(\nabla_T^{2(2s-2-j)+1} T,\nabla_T^{2j} T \right )T&=&\Big (\sum_{j=0}^{s-1}f(2s-2-j)h(j) \Big ) R(N,T)T \\
&&+\Big (\sum_{j=0}^{s-1}f(2s-2-j)g(j) \Big )R(N,B)T\\
&=&\Big (f(2s-2)+\sum_{j=1}^{s-1}f(2s-2-j)h(j) \Big ) R(N,T)T\\
&&+\Big (\sum_{j=1}^{s-1}f(2s-2-j)g(j) \Big )R(N,B)T\,.
\end{eqnarray*}
Similarly,
\begin{eqnarray*}
-\sum_{j=0}^{s-2}R\left(\nabla_T^{2(2s-2-j)} T,\nabla_T^{2j+1} T \right )T&=&\Big (-\sum_{j=1}^{s-1}f(j-1)h(2s-1-j) \Big ) R(T,N)T \\
&&-\Big (\sum_{j=1}^{s-1}f(j-1)g(2s-1-j) \Big )R(B,N)T \,.
\end{eqnarray*}
Next, using these last two computations and simplifying we obtain:
\begin{eqnarray*}
\tau_r(\gamma)&=&f(2s-1)N\\
&&+\Big(f(2s-2)+\sum_{j=1}^{s-1}\big[ f(2s-2-j)h(j)+f(j-1)h(2s-1-j)\big] \Big )R(N,T)T\\
&&+\Big (\sum_{j=1}^{s-1}\big [f(2s-2-j)g(j)+ f(j-1)g(2s-1-j)\big ]\Big ) R(N,B)T\,.
\end{eqnarray*}

Next, a simple, explicit computation of the sums in $\tau_r(\gamma)$ gives
\[
\tau_r(\gamma)=c\,\Big \{A_3 N+A_1 R(N,T)T+A_2R(N,B)T \Big \}\,,
\]
where $c=(-1)^r \kappa \left(\kappa^2+\tau ^2\right)^{r-3}$ and $A_i,\,i=1,2,3$ are defined in \eqref{def-A1A2A3}. So the proof is completed in the case that $r=2s$. When $r=2s+1$ the proof is analogous and so we omit the details.
\end{proof}
\begin{proof}[Proof of Corollary\link\ref{Cor-3-Frenet-general}]
It suffices to observe that $\langle \tau_r(\gamma),T\rangle =0$ and compute $\langle \tau_r(\gamma),N\rangle $ and $\langle \tau_r(\gamma),B\rangle $. Then we easily obtain equations \eqref{eq-r-harmonicity-3-helix-general}(i) and \eqref{eq-r-harmonicity-3-helix-general}(ii) respectively. More precisely, under the assumptions of this corollary, we have
\begin{eqnarray*}
\tau_r(\gamma)&=&c\Big\{\Big(A_1 R(N,T,N,T)+A_2 R(N,B,N,T)+A_3\Big ) N\\
&&\,\,\quad +\Big (A_1 R(N,T,B,T)+A_2 R(N,B,B,T)\Big ) B\Big \}
\end{eqnarray*}
as required to end the proof.
\end{proof}
\begin{proof}[Proof of Theorem \ref{Th-non-existence-r-curves}] First, we recall that the curvature tensor field $R$ on $\s^m$ is given by:
\begin{equation}\label{Curv-tensor-sfera}
R(X,Y)Z=-\langle X,Z \rangle Y+ \langle Y,Z \rangle X \,.
\end{equation}
Next, using the structural equations \eqref{Frenet-field-general} of the Frenet field it is elementary to show by induction that, for any $0 \leq \ell \leq n-1$, we can write
\begin{equation}\label{Nablaelle}
\nabla^\ell_T T=\sum_{j=1}^{\ell} a_j(s) F_j + \Pi_{j=1}^{\ell}k_j(s) F_{\ell+1}\,,
\end{equation}
where the explicit expression of the coefficients $a_j(s)$ is not relevant for our purposes.

Now, taking into account \eqref{r-harmonicity-curves}, \eqref{Curv-tensor-sfera} and \eqref{Nablaelle}, it is immediate to deduce that, whenever $2 r \leq n$, we can write
\[
\tau_r(\gamma)= \sum_{j=1}^{2r-1} b_j(s) F_j+\Pi_{j=1}^{2r-1}k_j(s) F_{2r}
\]
where, again, the explicit expression of the coefficients $b_j(s)$ is not relevant for our purposes. Now, inspection of the component of $\tau_r(\gamma)$ along $F_{2r}$ readily ends the proof.
\end{proof}

\section{$r$-harmonic Frenet helices in the $3$-dimensional solvable Lie group $\sol_3$}\label{Sec-Sol}
In this section we shall obtain a complete classification of $r$-harmonic Frenet helices in the $3$-dimensional solvable Lie group $\sol_3$. 

Let $\sol_3$ $=(\mathbb{R}^{3},g_{\sol})$ denote the $3$-dimensional Riemannian manifold given by $\R^3$ endowed with the metric tensor $g_{\sol}=e^{2z}{\rm d}x^{2}+e^{-2z}{\rm
d}y^{2}+{\rm d}z^{2}$ with respect to the standard Cartesian coordinates
$(x,y,z)$. 

The space $\sol_3$ is a $3$-dimensional solvable Lie group and the metric $g_{\sol}$ is left-invariant with respect to the group operation
\[
(x,y,z).(x',y',z')=\left (e^{z} x'+x,e^{-z}y'+y,z'+z \right )\,.
\] 
A global orthonormal frame field on $\sol_3$ is
\begin{equation}\notag
E_{1}=e^{-z}\frac{\partial}{\partial x},\;
E_{2}=e^{z}\frac{\partial}{\partial y},
\;E_{3}=\frac{\partial}{\partial z}.
\end{equation}
With respect to this orthonormal frame field, the Lie brackets and the
Levi-Civita connection can be easily computed. We have:
\begin{equation}\notag
[E_{1},E_{2}]=0, \;[E_{2},E_{3}]=-E_{2}, \;[E_{1},E_{3}]=E_{1},
\end{equation}
\begin{equation}\label{Derivate-Ei-Ej}
\begin{array}{lll}
\nabla_{E_{1}}E_{1}=-E_{3},&\nabla_{E_{1}}E_{2}=0, &\nabla_{E_{1}}E_{3}=E_{1}\\
\nabla_{E_{2}}E_{1}=0,& \nabla_{E_{2}}E_{2}=E_{3},&\nabla_{E_{2}}E_{3}=-E_{2}\\
\nabla_{E_{3}}E_{1}=0,&\nabla_{E_{3}}E_{2}=0,&\nabla_{E_{3}}E_{3}=0.\\
\end{array}
\end{equation}
A further computation gives
\begin{equation}\label{curv-1}
\begin{array}{lll}
R(E_1,E_2)E_1=-E_{2},& R(E_1,E_3)E_1=E_{3},
&R(E_1,E_2)E_2=E_{1},\\R(E_2,E_3)E_2=E_{3},&
R(E_1,E_3)E_3=-E_{1},&R(E_2,E_3)E_3=-E_{2}
\end{array}
\end{equation}
and from this we deduce
\begin{equation}\label{curv-2}
\begin{array}{lll}
 R_{1212}=1,\\
R_{1313}=-1,\\
R_{2323}=-1.
\end{array}
\end{equation}
Let $\gamma(s)=(x(s),y(s),z(s))$ be a $3$-Frenet curve in $\sol_3$ parametrized by the arc length $s$. Then
\begin{equation}\label{T}
T={x'}e^z E_1 +{y'}e^{-z} E_2+{z'} E_3=\left ( T_1,T_2,T_3 \right )\,.
\end{equation}

Now, let $V=V_1(s)E_1+V_2(s)E_2+V_3(s)E_3=\left ( V_1(s),V_2(s),V_3(s) \right )$ be any vector field along $\gamma$. Then a computation using \eqref{Derivate-Ei-Ej} and \eqref{T} shows that
\begin{equation}\label{nablaTV}
\nabla_T V=\left ( {V_1}'+T_1 V_3,{V_2}'-T_2 V_3,{V_3}'-T_1 V_1+T_2V_2\right )\,.
\end{equation}
Our main result in this context is:
\begin{theorem}\label{Th-main-Existence-Sol} Let $r\geq 3$. Then there exists a proper $r$-harmonic $3$-Frenet helix $\gamma$ in $\sol_3$ if and only if 
\[
\kappa=\frac{1}{4} \sqrt{\frac{3 r^2-10 r+7}{(r-2)^2}} \quad {\rm and} 
\quad \tau=\pm\frac{r-1}{4 (r-2)} \,.
\]
Moreover, up to isometries of $\sol_3$ and orientation of $\gamma$, the parametrization of such a proper $r$-harmonic helix is
\begin{equation}\label{parametrizations}
\gamma(s)=\left (-\frac{\sqrt{(1-c_1^2)}}{c_1\sqrt{2}} e^{-c_1s},\, \frac{\sqrt{(1-c_1^2)}}{c_1\sqrt{2 }} e^{c_1s}\,,\,c_1\,s \right) \,,
\end{equation}
where 
\begin{equation}\label{c1-sol}
c_1=\frac{1}{2} \sqrt{\frac{3 r-7}{r-2}} \,.
\end{equation}
\end{theorem}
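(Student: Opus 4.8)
The plan is to exploit that $\sol_3$ is three-dimensional, so Corollary~\ref{Cor-3-Frenet-general} applies with nothing to check: a $3$-Frenet helix $\gamma$ in $\sol_3$ is proper $r$-harmonic if and only if the two scalar equations \eqref{eq-r-harmonicity-3-helix-general}(i)--(ii) hold, the $A_i$ of \eqref{def-A1A2A3} being constants in this setting. I would first make these algebraic. Writing the Frenet frame in components with respect to the global frame, $T=(T_1,T_2,T_3)$, $N=(N_1,N_2,N_3)$, $B=(B_1,B_2,B_3)$, both $\{E_1,E_2,E_3\}$ and $\{T,N,B\}$ are positively oriented orthonormal frames, so the change of basis lies in $SO(3)$; the cofactor identities then express the quantities $N_1T_2-N_2T_1$, $N_1B_2-N_2B_1$, $B_1T_2-B_2T_1$ through $B_3,T_3,N_3$ respectively. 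Combined with the explicit curvature \eqref{curv-1} of $\sol_3$, this turns each of $R(N,T,N,T)$, $R(N,B,N,T)$, $R(N,T,B,T)$, $R(N,B,B,T)$ into a quadratic polynomial in $(T_3,N_3,B_3)$ alone, so \eqref{eq-r-harmonicity-3-helix-general} becomes a pair of polynomial identities in $T_3(s),N_3(s),B_3(s)$ that must hold for every $s$.

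Next comes the rigidity step. Equation \eqref{eq-r-harmonicity-3-helix-general}(ii) takes the form $N_3\cdot\ell(T_3,B_3)=0$ with $\ell$ a fixed nonzero linear form; that $\ell\equiv0$ is impossible follows because substituting $\ell=0$ into \eqref{eq-r-harmonicity-3-helix-general}(i) collapses it to an identity among $A_1,A_2,A_3$ incompatible with $A_1=\kappa^2(r-1)+\tau^2>0>-(\kappa^2+\tau^2)^2=A_3$. Since a helix is analytic, this gives $N_3\equiv0$: the principal normal stays orthogonal to $E_3$. Then orthonormality forces $T_3^2+B_3^2\equiv1$, and \eqref{eq-r-harmonicity-3-helix-general}(i) becomes a nonconstant polynomial equation for $T_3(s)$ with constant coefficients, whence $T_3$ --- and so $B_3$ --- is constant. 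Reading the Frenet equations \eqref{Frenet-field-general} through \eqref{nablaTV} one peels off the rest: the $E_3$-component of $\nabla_TT=\kappa N$ yields $T_1^2=T_2^2$; combining $\langle T,N\rangle=0$ with the $E_3$-component of $\nabla_TN=-\kappa T+\tau B$ determines $N_1,N_2$ and shows $2T_1^2=(\kappa T_3-\tau B_3)^2$ is constant, hence $T_1,T_2$ are constant; finally the $E_1$-components of $\nabla_TT=\kappa N$ and $\nabla_TN=-\kappa T+\tau B$ make $N$ and $B$ have constant components too. Thus the whole Frenet frame is constant in $\{E_1,E_2,E_3\}$. (The configurations excluded along the way --- $T_1\equiv T_2\equiv0$, a vertical line and hence a geodesic, and $\tau\equiv0$, which reduces to the $2$-Frenet case --- must be discussed separately, and produce no proper example.)

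Once the frame is constant the Frenet equations reduce to a finite algebraic system, giving for instance $\tau=\pm(\kappa^2+\tau^2)$, $T_3^2=1-(\kappa^2+\tau^2)$ and $\kappa^2=(\kappa^2+\tau^2)\bigl(1-(\kappa^2+\tau^2)\bigr)$; substituting these into \eqref{eq-r-harmonicity-3-helix-general}(i) leaves one last polynomial relation that pins down $\kappa^2+\tau^2=\tfrac{r-1}{4(r-2)}$, whence $\kappa^2=\tfrac{3r^2-10r+7}{16(r-2)^2}$ and $\tau=\pm\tfrac{r-1}{4(r-2)}$, with $0<\kappa^2+\tau^2<1$ so that these are genuine positive curvatures for every $r\ge3$. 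For the parametrization, with $T=(T_1,T_1,T_3)$ constant one integrates \eqref{T}: $z'=T_3$ gives $z(s)=T_3s$ up to a translation, and $x'=T_1e^{-z}$, $y'=T_1e^{z}$ then integrate to the exponential formulas \eqref{parametrizations} with $c_1=T_3$ as in \eqref{c1-sol}. Any other solution is carried onto this one by a left translation of $\sol_3$ together, if needed, with the orientation-reversing isometry $(x,y,z)\mapsto(x,-y,z)$, which preserves $g_{\sol}$ and reverses the sign of $\tau$; this accounts for the ``up to isometries of $\sol_3$ and orientation of $\gamma$'' clause. Conversely, inserting the stated $\kappa,\tau$ --- hence the explicit constant frame --- into \eqref{eq-r-harmonicity-3-helix-general} verifies (i)--(ii), so \eqref{parametrizations} is indeed a proper $r$-harmonic $3$-Frenet helix. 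The step I expect to be hardest is the rigidity argument --- passing from the two pointwise polynomial identities in $T_3,N_3,B_3$ to a constant Frenet frame, and carefully handling the degenerate configurations; the closing algebra determining $\kappa,\tau,c_1$ and the integration yielding \eqref{parametrizations} are then routine.
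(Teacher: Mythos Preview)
Your strategy coincides with the paper's: reduce to the two scalar equations \eqref{eq-r-harmonicity-3-helix-general} via Corollary~\ref{Cor-3-Frenet-general}, establish the rigidity step $N_3\equiv0$ and $T_3,B_3$ constant, and then integrate. The paper packages that rigidity step as Lemma~\ref{Th-N3=0} and simply cites \cite{Suceava} for its proof; it then integrates the coordinate ODEs directly --- $z=c_1s$ from $T_3$ constant, $y'=c_2e^{2c_1s}x'$ from $N_3=0$ via \eqref{T3N3B3}(ii), then $x(s)$ from $|T|=1$ --- and reads off $\kappa^2=c_1^2-c_1^4$, $\tau=\pm(1-c_1^2)$ from the explicit curve before imposing \eqref{eq-r-harmonicity-3-helix-general}(i). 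You instead supply a self-contained contradiction argument for $N_3\equiv0$ (your observation that $\ell(T_3,B_3)=0$ forces $A_1+A_3=0$ is exactly right: with $B_3=(A_2/A_1)T_3$ the $B_3^2$ and $T_3B_3$ terms in (i) cancel) and push further to show the \emph{entire} Frenet frame is constant in $\{E_i\}$ before integrating; this is a cleaner route and makes the later algebra more transparent.

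Two small points need tightening. After $N_3\equiv0$, equation~(i) still contains the product $T_3B_3$ linearly, so it is not literally a polynomial in $T_3$; you should square once (using $B_3^2=1-T_3^2$) to obtain a genuine degree-four polynomial in $T_3$ with leading coefficient $4(A_1^2+A_2^2)>0$, from which constancy follows. And the displayed identity ``$2T_1^2=(\kappa T_3-\tau B_3)^2$'' is not what the component equations yield; fortunately the conclusion you need, $T_1^2=T_2^2=\tfrac12(1-T_3^2)$ constant, already follows from your previous step $T_1^2=T_2^2$ together with $|T|=1$, so the argument survives unchanged.
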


Since the ambient space is $3$-dimensional, the analysis of Theorem\link\ref{Th-main-Existence-Sol} enables us to deduce:
\begin{corollary}\label{Cor-2-Frenet-helices-Sol} Let $r\geq 3$. Then there exists no proper $r$-harmonic $2$-Frenet helix in $\sol_3$.
\end{corollary}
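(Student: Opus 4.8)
The plan is to deduce the statement from the classification established in Theorem~\ref{Th-main-Existence-Sol}. Suppose $\gamma:I\to\sol_3$ were a proper $r$-harmonic $2$-Frenet helix with $r\geq 3$, unit tangent $T$, unit normal $N$ and constant curvature $\kappa>0$. Since $\dim\sol_3=3$, I would first complete $\{T,N\}$ to a positively oriented orthonormal frame $\{T,N,B\}$ along $\gamma$; then $\langle\nabla_T B,T\rangle=-\langle B,\nabla_T T\rangle=-\kappa\langle B,N\rangle=0$ and $\langle\nabla_T B,N\rangle=-\langle B,\nabla_T N\rangle=\kappa\langle B,T\rangle=0$, so $\nabla_T B=0$. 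Hence $\gamma$ is a $3$-Frenet helix in the sense of Definition~\ref{def-curves} (with $n=m=3$) whose curvatures are $\kappa$ and $\tau\equiv 0$. But Theorem~\ref{Th-main-Existence-Sol} asserts that the torsion of \emph{any} proper $r$-harmonic $3$-Frenet helix in $\sol_3$ equals $\pm\frac{r-1}{4(r-2)}$, which is nonzero for every $r\geq 3$; this contradicts $\tau\equiv 0$, and the corollary follows.

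The only delicate point in this reduction is to be sure that the classification in Theorem~\ref{Th-main-Existence-Sol} genuinely covers the degenerate case $\tau\equiv 0$, i.e.\ the non-full $3$-Frenet helices. This causes no trouble: in a $3$-dimensional ambient space the hypotheses of Corollary~\ref{Cor-3-Frenet-general} hold automatically, so $r$-harmonicity of a $3$-Frenet helix is \emph{equivalent} to the two scalar equations~\eqref{eq-r-harmonicity-3-helix-general}, which are polynomial in $\kappa$, $\tau$ and in the components of an adapted orthonormal frame and make sense for every real value of $\tau$. Hence the analysis carried out for Theorem~\ref{Th-main-Existence-Sol} applies verbatim with $\tau=0$ and yields no solution.

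I would also record, for completeness, the self-contained version of the argument, which is essentially the computation underlying the reduction. Using Theorem~\ref{Th-2-Frenet-general} and the curvature data~\eqref{curv-1}--\eqref{curv-2}, write $T$, $N$ and $B=T\times N$ in the frame $\{E_1,E_2,E_3\}$; one checks that $R(N,T,N,T)$ equals the sectional curvature of ${\rm Span}\{T,N\}$, which in $\sol_3$ is $2B_3^2-1$, and that $\langle R(T,N)T,B\rangle=2N_3B_3$. Therefore $\tau_r(\gamma)=0$ is equivalent to the two conditions $\kappa^2=(r-1)(2B_3^2-1)$ and $N_3B_3\equiv 0$ (the component of $\tau_r(\gamma)$ along $T$ vanishes automatically). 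Since $\kappa>0$ and $r\geq 3$, the first condition forces $B_3^2$ to be a constant strictly bigger than $1/2$; in particular $B_3$ is nowhere zero, so $N_3\equiv 0$ and $T_3^2=1-B_3^2$ is a constant in $(0,1/2)$. Plugging $N_3\equiv 0$ into the Frenet equations $\nabla_TT=\kappa N$, $\nabla_TN=-\kappa T$ through the expression~\eqref{nablaTV} for $\nabla_T$, and using $T_3'=0$, $\langle T,N\rangle=0$ and $|T|=|N|=1$, one obtains $T_1^2=T_2^2$ for all $s$ together with the linear system $T_1'=-\kappa\rho\,T_2$, $T_2'=\kappa\rho\,T_1$ with $\rho^2:=T_1^2+T_2^2=1-T_3^2>0$; differentiating $T_1^2=T_2^2$ then gives $T_1T_2\equiv 0$, hence $T_1\equiv T_2\equiv 0$, contradicting $\rho>0$.

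The main obstacle, in the self-contained route, is exactly this last chain of elementary but not entirely automatic deductions: the key inputs are the evaluation of $R(N,T,N,T)$ and $\langle R(T,N)T,B\rangle$ in $\sol_3$ for an arbitrary adapted orthonormal frame, and the passage from the pointwise vanishing of the two components of $\tau_r(\gamma)$ to a rigidity property that no admissible curve can meet. Along the first route this work is instead entirely absorbed in the proof of Theorem~\ref{Th-main-Existence-Sol}, and the corollary is immediate.
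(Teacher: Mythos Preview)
Your first reduction --- viewing a $2$-Frenet helix in a $3$-manifold as a $3$-Frenet helix with $\tau=0$ and invoking Theorem~\ref{Th-main-Existence-Sol} --- is correct and is exactly the paper's (one-line) argument. Your remark that the degenerate case $\tau=0$ is genuinely covered (because Definition~\ref{def-curves} allows $k_{n-1}$ to vanish when $n=m$, and the analysis behind Theorem~\ref{Th-main-Existence-Sol} goes through verbatim) is the right point to make explicit.

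In your optional self-contained route, however, the claimed ``linear system'' $T_1'=-\kappa\rho\,T_2$, $T_2'=\kappa\rho\,T_1$ is not what \eqref{nablaTV} actually gives. Writing $N=(\mp T_2,\pm T_1,0)/\rho$ (which follows from $N_3=0$, $\langle T,N\rangle=0$, $|N|=1$), the first two components of $\nabla_TT=\kappa N$ read
\[
T_1'=-T_1T_3\mp\frac{\kappa}{\rho}\,T_2,\qquad T_2'=T_2T_3\pm\frac{\kappa}{\rho}\,T_1,
\]
so the $T_iT_3$ terms survive. With these correct equations, differentiating $T_1^2=T_2^2$ yields $T_1T_2=-\rho^3T_3/(2\kappa)$, which is nonzero when $T_3\neq0$, and your conclusion $T_1T_2\equiv0$ fails. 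The repair is immediate and in fact simpler: $T_1^2=T_2^2$ together with $T_1^2+T_2^2=\rho^2$ constant forces $T_1^2=T_2^2=\rho^2/2$ to be constant, hence $T_1'=T_2'=0$; feeding this back and using $|N|=1$ gives $\rho^2T_3^2=\kappa^2$, while the third component of $\nabla_TN=-\kappa T$ gives $\rho^2T_3=\kappa^2T_3$, and these two are incompatible whether $T_3=0$ or not. So the self-contained argument can be made to work, but as written it has a gap; the main reduction to Theorem~\ref{Th-main-Existence-Sol} is unaffected.
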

\begin{remark}In \cite{Caddeo-altri-Mediterr-2006} the authors proved that there is no proper biharmonic curve in $\sol_3$. The special case $r=3$ of Theorem\link\ref{Th-main-Existence-Sol} was proved in \cite{Suceava}.
\end{remark}
\begin{proof}[Proof of Theorem \ref{Th-main-Existence-Sol}]
We assume that $B=T \wedge N$ so that $\{T,N,B\}$ is a positively oriented basis. Then,  the Frenet equations are:
\[
\left \{
\begin{array}{lll}
\nabla_T T&=&\kappa \,N \\
\nabla_T N&=&-\kappa \,T+ \tau \,B  \\
\nabla_T B&=&- \tau  \,N  \,
\end{array}
\right .
\]
and, according to Definition~\ref{def-curves}, in this case we allow that $\tau$ may be negative. Now, using \eqref{curv-1} and \eqref{curv-2}, we compute
\begin{equation}\label{curv-3}
\begin{array}{lll}
R(T,N,T,N)&=&-1+2 B_3^2 \\
R(T,N,T,B)&=&-2 N_3 B_3 \\
R(B,N,N,T)&=&2 T_3 B_3 \\
R(B,N,B,T)&=&-2 T_3 N_3 \,.
\end{array}
\end{equation}

Next, we need the following technical Lemma:
\begin{lemma}\label{Th-N3=0} Let $\gamma$ be a proper $r$-harmonic $3$-Frenet helix in $\sol_3$, $ r \geq 3$.
Then 
\begin{itemize}
\item [\rm(i)] $N_3\equiv 0$ along $\gamma$.
\item [\rm(ii)] Both $T_3$ and $B_3$ are constant along $\gamma$.
\end{itemize}
\end{lemma}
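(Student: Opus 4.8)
The plan is to exploit the explicit vector-field computations available in $\sol_3$ together with the reduction provided by Theorem~\ref{Th-3-Frenet-general} and Corollary~\ref{Cor-3-Frenet-general}. Since $\dim \sol_3 = 3$, the orthogonality hypotheses of Corollary~\ref{Cor-3-Frenet-general} are automatic, so an $r$-harmonic $3$-Frenet helix must satisfy the two scalar equations \eqref{eq-r-harmonicity-3-helix-general}(i) and (ii). Using the curvature identities \eqref{curv-3}, equation (ii) becomes $-2 A_1 N_3 B_3 - 2 A_2 T_3 N_3 = 0$, i.e. $N_3\,(A_1 B_3 + A_2 T_3) = 0$ — here I am rewriting $R(N,T,B,T)$ and $R(N,B,B,T)$ in terms of $N_3,B_3,T_3$ via the symmetries of $R$ and \eqref{curv-3}. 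So either $N_3 \equiv 0$ on an open set, or $A_1 B_3 + A_2 T_3 = 0$ there. The first goal is to rule out the second alternative (except where it forces $N_3=0$ anyway), and then to promote ``$N_3\equiv 0$ on an open set'' to ``$N_3\equiv 0$ everywhere'' by an analyticity/continuity argument or by directly showing the zero set is open and closed.

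First I would derive the differential constraints that $N_3$, $T_3$, $B_3$ must satisfy along $\gamma$. Writing $T=(T_1,T_2,T_3)$, $N=(N_1,N_2,N_3)$, $B=(B_1,B_2,B_3)$ and applying formula \eqref{nablaTV} to the Frenet equations $\nabla_T T = \kappa N$, $\nabla_T N = -\kappa T + \tau B$, $\nabla_T B = -\tau N$, the third components give the system
\[
T_3' = -T_1^2 + T_2^2 + \kappa N_3,\quad
N_3' = -T_1 N_1 + T_2 N_2 - \kappa T_3 + \tau B_3,\quad
B_3' = -T_1 B_1 + T_2 B_2 - \tau N_3.
\]
Combined with the algebraic relations coming from orthonormality ($|T|^2=|N|^2=|B|^2=1$, mutual orthogonality, and $B = T\wedge N$), these ODEs, together with the constancy of $\kappa,\tau$, should let me close the argument. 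The key step is to feed in equations \eqref{eq-r-harmonicity-3-helix-general}(i)–(ii) with the $A_i$ from \eqref{def-A1A2A3}: substituting \eqref{curv-3} into (i) yields a relation of the form $A_1(-1 + 2B_3^2) + A_2(\text{something in }N_3,B_3,T_3) + A_3 = 0$, which is an algebraic identity that must hold identically in $s$. Differentiating it and using the ODE system, then eliminating, I expect to force $N_3' \equiv 0$ and hence (from the $N_3$-equation and (ii)) a rigid algebraic situation; bootstrapping, $T_3$ and $B_3$ come out constant as well.

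The main obstacle I anticipate is the case analysis around equation (ii): disposing of the branch $A_1 B_3 + A_2 T_3 = 0$ with $N_3 \not\equiv 0$. Here one must use that $A_1 = \kappa^2(r-1)+\tau^2 > 0$ is a nonzero constant while $A_2 = -\kappa\tau(r-2)$ is a (possibly zero) constant, so $A_1 B_3 + A_2 T_3 = 0$ forces $B_3$ to be an affine function of $T_3$ along $\gamma$; differentiating this and inserting the ODEs for $T_3',B_3'$ produces $-T_1 B_1 + T_2 B_2 - \tau N_3 = -\frac{A_2}{A_1}(-T_1^2 + T_2^2 + \kappa N_3)$, and then one needs the further structural relations (e.g. that $T_1,T_2,N_1,N_2,B_1,B_2$ are tangled through $B=T\wedge N$ and orthonormality) to reach a contradiction or to conclude $N_3 \equiv 0$ after all. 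I would handle this by also using the full tension-field reduction $\tau_r(\gamma)=c\{A_3 N + A_1 R(N,T)T + A_2 R(N,B)T\}$ directly: computing $R(N,T)T$ and $R(N,B)T$ in the frame $\{E_1,E_2,E_3\}$ via \eqref{curv-1} gives their components as explicit quadratic expressions in $T_3,N_3,B_3$, and then the vanishing of $\tau_r(\gamma)$ is a genuine system of three scalar equations whose consistency, exploited alongside $|N|^2=1$, should pin down $N_3=0$. Once (i) is established, part (ii) is a short consequence: with $N_3\equiv 0$ the $T_3'$ and $B_3'$ equations simplify, and plugging $N_3=0$ back into the (now purely algebraic) $r$-harmonicity equation \eqref{eq-r-harmonicity-3-helix-general}(i) — which reads $A_1(-1+2B_3^2) + A_3 = 0$ — shows $B_3^2$ is constant, hence $B_3$ is constant (its sign cannot jump since $B_3$ is continuous and, by the ODE, differentiable with $B_3' = -T_1B_1 + T_2B_2$, which vanishes once one also checks $T_3$ is constant), and finally $T_3$ is constant from $T_3' = -T_1^2+T_2^2$ combined with the remaining constraints.
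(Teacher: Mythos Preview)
Your plan is in the right direction, but it takes a substantial detour: the differential argument you outline for part~(i) is unnecessary in $\sol_3$, because the two scalar $r$-harmonicity equations close \emph{algebraically}. Concretely, writing out \eqref{eq-r-harmonicity-3-helix-general} using \eqref{curv-3} gives
\[
\text{(i)}\quad A_1(-1+2B_3^2)-2A_2\,T_3B_3+A_3=0,\qquad
\text{(ii)}\quad -2N_3\big(A_1B_3-A_2T_3\big)=0\,.
\]
If $N_3\neq 0$ at some point, then on a neighborhood $B_3=(A_2/A_1)\,T_3$; substituting this into (i), the $T_3^2$-terms cancel identically and one is left with $-A_1+A_3=0$. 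But $A_1=\kappa^2(r-1)+\tau^2>0$ while $A_3=-(\kappa^2+\tau^2)^2<0$, a contradiction. Hence $N_3\equiv 0$, and no ODEs are needed. (The paper itself gives no details here, merely referring to the $r=3$ case treated in \cite{Suceava}; the BCV proof of the analogous Lemma~\ref{Th-N3=0-BCV} does require differentiation because the curvature formulas \eqref{curv-3-BCV} are less symmetric, so your instinct to imitate that argument is understandable but not optimal in $\sol_3$.)

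Two corrections to your write-up. First, your ODEs have sign errors: from \eqref{nablaTV} one gets $T_3'=\kappa N_3 + T_1^2-T_2^2$ and $B_3'=-\tau N_3 + T_1B_1-T_2B_2$, not the signs you wrote. Second, in your treatment of part~(ii) you dropped the $A_2$-term when recording equation~(i) with $N_3=0$: it should read $A_1(-1+2B_3^2)-2A_2T_3B_3+A_3=0$, not $A_1(-1+2B_3^2)+A_3=0$. The correct argument is then purely algebraic again: once $N_3\equiv 0$ one has $T_3^2+B_3^2=1$ together with the quadratic relation above; since the conic $2A_1B_3^2-2A_2T_3B_3=A_1-A_3$ has no $T_3^2$-term it cannot contain the unit circle, so their intersection is finite and $(T_3,B_3)$ is locally constant, hence constant by continuity. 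Your proposed route via $T_3'=-T_1^2+T_2^2$ (even with the sign fixed) would instead require controlling the mixed terms $T_1^2-T_2^2$ and $T_1B_1-T_2B_2$, which is possible but circuitous.
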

\begin{proof}[Proof of Lemma\link\ref{Th-N3=0}] This lemma was proved in \cite{Suceava} in the case that $r=3$ (see Lemma 4.1 of \cite{Suceava}). The proof in the case that $r \geq 3$ is essentially the same and so we omit further details.
\end{proof}

Now, a routine computation shows that for any $3$-Frenet helix $\gamma$ in $\sol_3$ we have:
\begin{equation}\label{T3N3B3}
\begin{array}{lll}
{\rm (i)}&T_3=&z' \\
&&\\
{\rm (ii)}&N_3=&-\displaystyle{\frac{1}{\kappa} \big ( e^{2z}x'^2- e^{-2z}y'^2-z'' \big )}\\
&&\\
{\rm (iii)}&B_3=&\displaystyle{\frac{1}{\kappa} \big ( -y'x''+x' (-4y'z'+y'') \big )} 
\end{array}
\end{equation}
Since we work up to isometries and orientation of $\gamma$, $T_3$ constant implies that we can assume
\begin{equation}\label{z1}
z(s)=c_1 s 
\end{equation} 
for some constant $c_1$ with $0 \leq c_1\leq 1$. Moreover, as a consequence of Lemma\link\ref{Th-N3=0}, we can assume that $N_3=0$ along $\gamma$. 

Then integration of the condition $N_3=0$ using \eqref{T3N3B3}(ii) and \eqref{z1} yields
\begin{equation}\label{yprimoxprimo}
y'=c_2 e^{2c_1s}x'
\end{equation}
where $c_2>0$ is an integration constant.
Next, using \eqref{T}, \eqref{z1} and imposing $\langle T,T \rangle=1$ we deduce
\[
c_1^2+2 c_2 e^{2c_1s}x'^2=1 
\]
along $\gamma$. Integration of this equality tells us that either
\begin{equation}\label{x1}
x(s)=-\frac{\sqrt{(1-c_1^2)}}{c_1\sqrt{2 c_2}} e^{-c_1s}+c_x 
\end{equation}
or 
\begin{equation}\label{xtilde}
x(s)=\frac{\sqrt{(1-c_1^2)}}{c_1\sqrt{2 c_2}} e^{-c_1s}+c_x 
\end{equation}
where $c_x \in \R $ is an integration constant. Arguing as in \cite{Suceava}, we see that it is not restrictive to assume that we are in the case \eqref{x1} and that $c_2=1$, $c_x=0$. 

Now, direct substitution into \eqref{yprimoxprimo} and integration enables us to deduce that
\[
y(s)=\frac{\sqrt{(1-c_1^2)}}{c_1\sqrt{2 }} e^{c_1s}+c_y  \,,
\]
and, again, we can assume that $c_y=0$.
Now, a straightforward computation using these explicit expressions for $x(s),y(s),z(s)$ and \eqref{nablaTV} shows that:
\[
\kappa^2=c_1^2-c_1^4 \,, \quad \tau=1-c_1^2 \,.
\]
Finally, using these values for $\kappa$ and $\tau$ into \eqref{eq-r-harmonicity-3-helix-general}, together with \eqref{curv-3} and $B_3=\pm \sqrt{1-c_1^2}$, it is straightforward to check that the only acceptable value for $c_1$ is \eqref{c1-sol}. This value for $c_1$ gives the stated values of $\kappa$ and $\tau$ (positive) and so the proof of Theorem\link\ref{Th-main-Existence-Sol} is completed in the case \eqref{x1}. The case \eqref{xtilde} is analogous and gives the negative value of $\tau=c_1^2-1$.
\end{proof}

\section{$r$-harmonic Frenet helices in BCV-spaces}\label{Sec-BCV}

It is well-known (see, e.g., \cite{Belkhelfa-altri-book-2000}, \cite{Caddeo-altri-Mediterr-2006}, \cite{manzano-torralbo}, \cite{Daniel-Comm-Math-Helv-2007}) that $3$-dimensional homogeneous spaces with group of isometries of dimension $4$ admit, as a {\it local} canonical model, the so called  Bianchi-Cartan-Vranceanu  spaces (shortly, BCV-spaces)
\begin{equation}\label{CV}
M^3_{m,\ell}=\left(\bar{M}\times \R,g=\frac{dx^2+dy^2}{[1+m(x^2+y^2)]^2}+\Big[dz+\frac{\ell}{2}\,\frac{y
dx-x dy}{1+m(x^2+y^2)}\Big ]^2\right)\,,
\end{equation}
where $\bar{M}=\{(x,y)\in\R^2\colon 1+m(x^2+y^2)>0\}$.

The space $M^3_{m,\ell}$ is the total space of the following Riemannian submersion over a simply connected surface $M^2(4m)$ of constant curvature $4m$, see \cite{Daniel-Comm-Math-Helv-2007}: 
\begin{eqnarray}\label{RSM}
\pi:
M^3_{m,\ell}\longrightarrow  M^2(4m)=\left(\bar{M},h=\frac{dx^2+dy^2}{[1+m(x^2+y^2)]^2}\right),\;\;\;
\pi(x,y,z)=(x, y).
\end{eqnarray}
We point out that while in $\s^n(\rho)$ the letter $\rho$ indicates the radius, in $M^2(4m),\,\h^2(4m)$ the real number within the brackets represents the sectional curvature.

These BCV-spaces are also a local model for the Thurston eight $3$-dimensional geometries with the exception of the
hyperbolic space $\Hy^3(m)$ and $\sol_3$. More precisely, we have the following cases: 
\begin{enumerate}
\item when $\ell=m=0$, then  $M^3_{m,\ell}$  is $ \R ^3$;
\item when $\ell^2=4m>0$, then $M^3_{m,\ell}$  is a local model of $ \s^3(1/\sqrt{m})$;
\item when $\ell=0$ and $m<0$, then  $M^3_{m,\ell}$  is the product space ${\mathbb H}^2(4m)\times \R$;
\item when $\ell=0$ and $m>0$, then  $M^3_{m,\ell}$  is the product space $\s^2(\frac{1}{2\sqrt{m}})\setminus \{\infty\}\times \R$;
\item when $\ell\neq 0$ and $m=0$, then $M^3_{m,\ell}$ is the Heisenberg space ${\rm Nil_3}$;
\item when $\ell\neq 0$ and $m< 0$, then $M^3_{m,\ell}$ is  $\widetilde{\rm SL}(2,\R)$;
\item  when $\ell\neq 0,\, m>0$ and  $\ell^2\neq 4m$, then $M^3_{m,\ell}$  is a local model of ${\rm SU}(2)$ with a Berger metric.
\end{enumerate}

The aim of this section is to investigate the existence of $r$-harmonic $3$-Frenet helices in BCV-spaces. 

We shall mainly focus on the case $r\geq 3$ because the biharmonic case was thoroughly investigated in \cite{Caddeo-altri-Mediterr-2006,MR2129448}.

We begin by introducing some basic preliminary facts. First, it is easy to check that the vector fields
\begin{equation}\notag
E_{1}=F\frac{\partial}{\partial
x}-\frac{\ell y}{2}\frac{\partial}{\partial z},\quad E_{2}=F
\frac{\partial}{\partial y}+\frac{\ell x}{2}\frac{\partial}{\partial
z},\quad E_{3}=\frac{\partial}{\partial z},
\end{equation}
where $F=1+m(x^2+y^2)$, form a global orthonormal
frame field. Moreover,
\begin{equation}\label{CNil}
\begin{array}{ll}
\nabla_{E_{1}}E_{1}=2myE_{2}\,,& \nabla_{E_{2}}E_{2}=2mxE_{1}\,,\vspace{2mm} \\
\nabla_{E_{1}}E_{2}=-2myE_{1}+\dfrac{\ell}{2}E_{3}\,,&
\nabla_{E_{2}}E_{1}=-2mxE_{2}-\dfrac{\ell}{2}E_{3}\,,\vspace{2mm} \\
\nabla_{E_{3}}E_{1}=\nabla_{E_{1}}E_{3}=-\dfrac{\ell}{2}E_{2}\,,& \nabla_{E_{3}}E_{2}=\nabla_{E_{2}}E_{3}=\dfrac{\ell}{2}E_{1}\,,\vspace{2mm} \\
{\rm all \;\; others\;\;} \nabla_{E_i}E_j=0,\;i,j=1, 2, 3.&
\end{array}
\end{equation}
The Riemannian curvature operator $R$ of $M^3_{m,\ell}$ can be described as follows (see \cite{Daniel-Comm-Math-Helv-2007}, where the opposite sign convention for the curvature tensor is adopted):
\begin{eqnarray}\label{tensore-curvatura-general-expression}
 R(X,Y)Z&=& \left ( 4m - \frac{3 \ell^2}{4}\right )\big( - \langle X,Z \rangle Y  + \langle Y,Z \rangle X \big )-(4 m - \ell^2)\nonumber \\
&& \big( \langle Y,E_3 \rangle \langle Z,E_3 \rangle X+
 \langle Y,Z \rangle \langle X,E_3 \rangle E_3-\langle X,E_3 \rangle \langle Z,E_3 \rangle Y-
 \langle X,Z \rangle \langle Y,E_3 \rangle E_3\big) \,.
\end{eqnarray}
Then a simple computation gives the possible non-zero values of the sectional curvatures:
\begin{equation}\label{BCV1}
\begin{split}
 R_{1212}&=\langle R(E_{1},E_{2})E_{2},E_{1} \rangle=4m-\frac{3\ell ^2}{4},\\
R_{1313}&=\langle R(E_{1},E_{3})E_{3},E_{1}\rangle =\frac{\ell ^2}{4},\\
R_{2323}&=\langle R(E_{2},E_{3})E_{3},E_{2} \rangle=\frac{\ell ^2}{4}\,.
\end{split}
\end{equation}

Now, let $\gamma:I \to M_{m,\ell}^3$ be a curve parametrized by the arc length $s$ in a BCV-space. Then, with respect to the orthonormal frame field $\left \{ E_1,E_2,E_3 \right\}$,
\[
T=\left (T_1,T_2,T_3 \right )=\left ( \frac{x'}{F},\,\frac{y'}{F},\,\frac{2 z' \,F+\ell ( y x'- x y')}{2 \,F}\right ) \,.
\]
Moreover, a computation based on \eqref{CNil} shows that, if $V=\left(V_1,V_2,V_3 \right)$, we have:
\begin{eqnarray*}
\nabla_T V&=&\left(V'_1,V'_2,V'_3\right )+\Big ( -2 m y T_1V_2+2mxT_2V_2+\frac{\ell}{2}\left (T_2V_3+T_3V_2 \right ),
\\
&&2my T_1V_1-2 mx T_2V_1-\frac{\ell}{2}\left (T_1V_3+T_3V_1 \right ),\frac{\ell}{2}\left (T_1V_2-T_2V_1 \right ) \Big )\,.
\end{eqnarray*}
As in the proof of Theorem\link\ref{Th-main-Existence-Sol}, we can assume $B=T \wedge N$. Now, using \eqref{BCV1}, we compute:
\begin{equation}\label{curv-3-BCV}
\begin{array}{lll}
R(T,N,T,N)&=&\frac{\ell ^2}{4}+(4 m - \ell ^2)B_3^2 \\
R(T,N,T,B)&=&-B_3 N_3 \left(4 m-\ell ^2\right) \\
R(B,N,N,T)&=& T_3 B_3 \left(4 m-\ell ^2\right) \\ 
R(B,N,B,T)&=&- N_3 T_3 \left(4 m-\ell ^2\right)\,.
\end{array}
\end{equation}
Next, a straightforward computation using Lemma\link\ref{lemma-Branding-nablaelle}, Corollary\link\ref{Cor-3-Frenet-general} and \eqref{curv-3-BCV} leads us to our first general result:
\begin{proposition}\label{Prop-taur-BCV}
Let $\gamma$ be a $3$-Frenet helix in a BCV-space $M^3_{m,\ell}$. Then
\begin{equation}\label{TAUr-BCV-elica}
\tau_r(\gamma)= \tau_{r,N} N + \tau_{r,B} B \,,
\end{equation}
where
\begin{equation}\label{TAUrN-BCV}
\tau_{r,N} =\frac{1}{4} \left(4 B_3^2 \left(4 m-\ell ^2\right)+\ell ^2\right) \left( (r-1)\kappa^2+\tau ^2\right)+(r-2)T_3 B_3 \kappa  \tau  \left(4 m-\ell ^2\right)-\left(\kappa^2+\tau ^2\right)^2
\end{equation}
and
\begin{equation}\label{TAUrB-BCV}
\tau_{r,B} =-N_3 \left(4 m-\ell^2\right) \left(B_3 \left(\kappa^2 (r-1)+\tau ^2\right)+(r-2) T_3\kappa \tau  \right)\,.
\end{equation}
\end{proposition}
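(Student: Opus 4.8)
The strategy is to simply specialize Corollary~\ref{Cor-3-Frenet-general} to the BCV ambient space and make the two inner products explicit. First I would observe that, since $M^3_{m,\ell}$ is $3$-dimensional, the hypotheses of Corollary~\ref{Cor-3-Frenet-general} are vacuously satisfied (there is no nontrivial $W\in(\mathrm{Span}\{T,N,B\})^\perp$), so by the Remark following that corollary we automatically have $\tau_r(\gamma)\in\mathrm{Span}\{T,N,B\}$ and moreover the $T$-component vanishes, i.e. $\tau_r(\gamma)=\tau_{r,N}N+\tau_{r,B}B$ with
\[
\tau_{r,N}=c\big(A_1R(N,T,N,T)+A_2R(N,B,N,T)+A_3\big),\qquad
\tau_{r,B}=c\big(A_1R(N,T,B,T)+A_2R(N,B,B,T)\big),
\]
where $c=(-1)^r\kappa(\kappa^2+\tau^2)^{r-3}$ and $A_1,A_2,A_3$ are as in \eqref{def-A1A2A3}. (In fact the cleanest route is to re-run the computation in the proof of Theorem~\ref{Th-3-Frenet-general} — which only uses Lemma~\ref{lemma-Branding-nablaelle} and is valid verbatim here — and then pair the resulting vector against $N$ and $B$.)

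\textbf{Key steps.} Next I would insert the curvature identities \eqref{curv-3-BCV} into these two expressions. Using the symmetries of $R$ we have $R(N,T,N,T)=R(T,N,T,N)=\tfrac{\ell^2}{4}+(4m-\ell^2)B_3^2$, $R(N,B,N,T)=R(B,N,N,T)=T_3B_3(4m-\ell^2)$, $R(N,T,B,T)=-R(T,N,T,B)=B_3N_3(4m-\ell^2)$ — wait, one must be careful with signs: $R(N,T,B,T)=R(B,T,N,T)$? No; rather use $R(N,T,B,T)=\langle R(N,T)T,B\rangle$ and the pair symmetry $R(X,Y,Z,W)=R(Z,W,X,Y)$ to get $R(N,T,B,T)=R(B,T,N,T)=-R(T,B,N,T)=-R(N,T,T,B)$; the bookkeeping is routine and ends with $R(N,T,B,T)=-B_3N_3(4m-\ell^2)$ and $R(N,B,B,T)=R(B,T,B,N)=\dots=-N_3T_3(4m-\ell^2)$, matching lines 3–4 of \eqref{curv-3-BCV} up to the ordering of arguments used there. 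Substituting and collecting, $\tau_{r,N}$ becomes
\[
c\Big[\big(\kappa^2(r-1)+\tau^2\big)\big(\tfrac{\ell^2}{4}+(4m-\ell^2)B_3^2\big)-\kappa\tau(r-2)\,T_3B_3(4m-\ell^2)-(\kappa^2+\tau^2)^2\Big],
\]
and $\tau_{r,B}$ becomes $c\cdot N_3(4m-\ell^2)\big[-B_3(\kappa^2(r-1)+\tau^2)+(r-2)T_3\kappa\tau\big]$ — at which point I would absorb the factor $c$ (which is nonzero since $\kappa>0$ for a $3$-Frenet curve) into a rescaling, or rather note that the stated $\tau_{r,N},\tau_{r,B}$ in the Proposition are exactly these expressions \emph{without} the factor $c$; so one should track whether the Proposition intends $\tau_r(\gamma)$ literally or up to the harmless scalar. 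Comparing with \eqref{TAUrN-BCV} and \eqref{TAUrB-BCV}, the match is verbatim after pulling out $\tfrac14$ in the $N$-component and after a sign normalization in the $B$-component.

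\textbf{Main obstacle.} There is no conceptual obstacle; the only thing requiring genuine care is the sign/symmetry bookkeeping when translating between the curvature quantities $R(T,N,T,B)$, $R(B,N,N,T)$, $R(B,N,B,T)$ as written in \eqref{curv-3-BCV} and the quantities $R(N,B,N,T)$, $R(N,T,B,T)$, $R(N,B,B,T)$ appearing in \eqref{eq-r-harmonicity-3-helix-general}, together with reconciling the overall factor $c=(-1)^r\kappa(\kappa^2+\tau^2)^{r-3}$ against the normalization chosen in the statement. A clean way to avoid all of this is to not go through Corollary~\ref{Cor-3-Frenet-general} at all but to directly substitute \eqref{formula-nablaelleTT-3-Frenet} into \eqref{r-harmonicity-curves} and use the very explicit form \eqref{tensore-curvatura-general-expression} of the BCV curvature operator: then $R(\nabla_T^a T,\nabla_T^b T)T$ is computed once and for all as a combination of $T,N,B$ with coefficients that are polynomials in $\ell^2,m,T_3,N_3,B_3$, and summing the series (again via the telescoping identities already used for Theorem~\ref{Th-3-Frenet-general}) yields \eqref{TAUrN-BCV}–\eqref{TAUrB-BCV} directly. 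I expect this second route to be the one actually taken, as it produces the stated coefficients with their precise numerical factors in one pass.
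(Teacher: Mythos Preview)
Your approach is exactly the one the paper takes: it says the result follows from ``a straightforward computation using Lemma~\ref{lemma-Branding-nablaelle}, Corollary~\ref{Cor-3-Frenet-general} and \eqref{curv-3-BCV},'' i.e., specialize the general $3$-Frenet formula to the BCV curvature values and read off the $N$- and $B$-components. Your observation about the overall scalar $c=(-1)^r\kappa(\kappa^2+\tau^2)^{r-3}$ is correct and worth flagging --- the displayed $\tau_{r,N},\tau_{r,B}$ are indeed the bracketed expressions \emph{without} this nonzero factor, so the identity \eqref{TAUr-BCV-elica} holds up to that harmless multiplicative constant (which is all that matters for the $r$-harmonicity condition $\tau_r(\gamma)=0$ used downstream).
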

Now, the main objective of this section becomes the description of the best applications of Proposition\link\ref{Prop-taur-BCV}.

Since the case of space forms has been thoroughly discussed in \cite{MR4542687}, from now on we shall restrict our attention to the case that $4m \neq \ell^2$. First, we establish the following important technical lemma:
\begin{lemma}\label{Th-N3=0-BCV} Let $\gamma$ be a proper $r$-harmonic $3$-Frenet helix in a BCV-space $M_{m,\ell}^3$, $ r \geq 2$, $4m \neq \ell^2$.
Then 
\begin{itemize}
\item [\rm(i)] $N_3\equiv 0$ along $\gamma$.
\item [\rm(ii)] Both $T_3$ and $B_3$ are constant along $\gamma$ and $B_3\neq 0$.
\end{itemize}
\end{lemma}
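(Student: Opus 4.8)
The plan is to exploit Proposition~\ref{Prop-taur-BCV} together with the fact that, since the ambient space is $3$-dimensional, the $r$-harmonicity of $\gamma$ is equivalent to the vanishing of both components $\tau_{r,N}$ and $\tau_{r,B}$ in \eqref{TAUr-BCV-elica}. First I would record the Frenet equations $\nabla_T T=\kappa N$, $\nabla_T N=-\kappa T+\tau B$, $\nabla_T B=-\tau N$ and use them, via the explicit connection formula for $\nabla_T V$ in a BCV-space, to derive differential relations for the component functions $T_3,N_3,B_3$ along $\gamma$. Precisely, applying $\nabla_T$ to $T=(T_1,T_2,T_3)$ one gets $\kappa N_3=\tfrac{\ell}{2}(T_1T_2-T_2T_1)+T_3'=T_3'$, so that $T_3'=\kappa N_3$; similarly differentiating $N$ and $B$ and picking out the $E_3$-component yields $N_3'=-\kappa T_3-\tau B_3$ and $B_3'=-\tau N_3$ (the terms involving $2my,2mx$ and the off-diagonal $\ell/2$ pieces cancel in the third slot because $T_iV_j-T_jV_i$ antisymmetrizes). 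These three first-order ODEs are the backbone of the argument.

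Next I would use the hypothesis that $\gamma$ is \emph{proper} $r$-harmonic, i.e.\ $\tau_{r,B}\equiv 0$ with $\tau_{r,N}\equiv 0$ as well. From \eqref{TAUrB-BCV}, since $4m\neq\ell^2$, the vanishing of $\tau_{r,B}$ forces
\[
N_3\Big(B_3\big(\kappa^2(r-1)+\tau^2\big)+(r-2)T_3\kappa\tau\Big)=0
\]
at every point. The goal is to rule out the possibility that the second factor vanishes on an open set while $N_3\neq 0$. On such a set one would have $B_3$ expressed algebraically in terms of $T_3$ (with constant coefficients $\kappa,\tau$), namely $B_3=-\tfrac{(r-2)\kappa\tau}{\kappa^2(r-1)+\tau^2}\,T_3$; feeding this linear relation into the ODE system above (using $T_3'=\kappa N_3$, $B_3'=-\tau N_3$) gives $-\tau N_3=-\tfrac{(r-2)\kappa\tau}{\kappa^2(r-1)+\tau^2}\kappa N_3$, hence $N_3\big(\tau-\tfrac{(r-2)\kappa^2\tau}{\kappa^2(r-1)+\tau^2}\big)=0$, i.e.\ $N_3\,\tau\,\tfrac{\kappa^2+\tau^2}{\kappa^2(r-1)+\tau^2}=0$; since $\kappa>0$ and $\kappa^2+\tau^2>0$ this forces $\tau=0$ or $N_3=0$, and one then checks (using $\tau_{r,N}=0$ with $\tau=0$, or directly) that the proper $r$-harmonicity is violated. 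Thus $N_3\equiv 0$ on a dense open set, and by continuity $N_3\equiv 0$ along $\gamma$, proving (i). Once $N_3\equiv 0$, the ODEs immediately give $T_3'=\kappa N_3=0$ and $B_3'=-\tau N_3=0$, so $T_3$ and $B_3$ are constant, and the remaining relation $N_3'=-\kappa T_3-\tau B_3=0$ is automatically consistent; this yields the first half of (ii).

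Finally, to see $B_3\neq 0$: if $B_3=0$ then, since $N_3=0$, the vector $T$ would satisfy $T_3^2=1$ (as $\{T,N,B\}$ is orthonormal and $E_3=(0,0,1)$ in these coordinates)---more precisely $N_3=B_3=0$ together with orthonormality forces $T_3=\pm 1$, so $T=\pm E_3$, meaning $\gamma$ is an integral curve of $E_3$. But $\nabla_{E_3}E_3=0$ by \eqref{CNil}, so such a curve is a geodesic, contradicting properness. Alternatively, $T_3=\pm1$ combined with $N_3=0$ and the ODE $N_3'=-\kappa T_3-\tau B_3=\mp\kappa\neq 0$ gives an immediate contradiction since $\kappa>0$. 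Hence $B_3\neq 0$, completing (ii).

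The main obstacle I anticipate is the dichotomy in the proof of (i): handling the open set where the bracket in $\tau_{r,B}$ vanishes requires care, since one must show the resulting algebraic constraint $B_3=-\tfrac{(r-2)\kappa\tau}{\kappa^2(r-1)+\tau^2}T_3$ is incompatible with the ODE system \emph{and} with $\tau_{r,N}=0$ for all $r\geq 2$; the edge cases $\tau=0$ (curve is a circle/planar) and $r=2$ need to be treated separately, and for $r=2$ one should simply invoke the known biharmonic classification in \cite{Caddeo-altri-Mediterr-2006,MR2129448}. The rest is routine ODE bookkeeping.
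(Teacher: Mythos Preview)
Your overall strategy mirrors the paper's, but there is a genuine computational error that invalidates the key step in part~(i). You claim that the ``off-diagonal $\ell/2$ pieces cancel in the third slot because $T_iV_j-T_jV_i$ antisymmetrizes,'' and hence that $B_3'=-\tau N_3$. This is false: the expression $T_1V_2-T_2V_1$ vanishes only when $V=T$. For $V=B$ one has $T_1B_2-T_2B_1=-N_3$ (since $N=B\wedge T$), so the third component of $\nabla_T B$ reads $B_3'-\tfrac{\ell}{2}N_3=-\tau N_3$, i.e.
\[
B_3'=\Big(\frac{\ell}{2}-\tau\Big)N_3\,.
\]
This is exactly \eqref{BprimeTprime} in the paper, and the extra $\tfrac{\ell}{2}N_3$ is essential. (Your formula for $N_3'$ is likewise off: the correct one is $N_3'=-\kappa T_3+(\tau-\tfrac{\ell}{2})B_3$.)

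With the correct ODE, differentiating the relation $B_3=-\dfrac{(r-2)\kappa\tau}{\kappa^2(r-1)+\tau^2}\,T_3$ no longer yields the dichotomy ``$\tau=0$ or $N_3=0$''; instead it produces a constraint on $\ell$, namely $\ell=\dfrac{2\tau(\kappa^2+\tau^2)}{\kappa^2(r-1)+\tau^2}$. The paper then substitutes this value of $\ell$ together with the expression for $B_3$ into $\tau_{r,N}$ and obtains
\[
\tau_{r,N}=-\frac{\kappa^2(r-1)(\kappa^2+\tau^2)^2}{\kappa^2(r-1)+\tau^2}\neq 0,
\]
which is the contradiction. Your argument stops short of this substitution and relies on a dichotomy that does not actually hold when $\ell\neq 0$. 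Once you correct the ODE for $B_3'$ and carry out this final substitution into \eqref{TAUrN-BCV}, your proof of (i) will coincide with the paper's; your part~(ii) is then fine as written, since both $T_3'$ and $B_3'$ are multiples of $N_3$ regardless of the $\ell/2$ term.
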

\begin{proof}[Proof of Lemma\link\ref{Th-N3=0-BCV}] Although the statement is similar to Lemma\link\ref{Th-N3=0}, the proof is substantially different and so we include it.

(i) First, we observe that
\begin{eqnarray}\label{BprimeTprime}\nonumber
B_3'&=&\nabla_T B_3=\nabla_T\langle B, E_3 \rangle= \langle \nabla_T B, E_3 \rangle+\langle B,\nabla_T E_3 \rangle\\
&=&-\tau \,N_3+\langle B,\frac{\ell}{2}\,T_2 E_1-\frac{\ell}{2}\,T_1 E_2 \rangle=-\tau \,N_3+\frac{\ell}{2}\, N_3 \\\nonumber
T_3'&=&\nabla_T T_3=\nabla_T\langle T, E_3 \rangle= \langle \nabla_T T, E_3 \rangle+\langle T,\nabla_T E_3 \rangle\\\nonumber
&=&\kappa \,N_3+\langle T,\frac{\ell}{2}\,T_2 E_1-\frac{\ell}{2}\,T_1 E_2 \rangle=\kappa \,N_3 \,.
\end{eqnarray}

We argue by contradiction and assume that $N_3 \neq 0$ on some open interval. Since $\tau_{r,B}=0$, we deduce that
\begin{equation}\label{B3-BCV}
B_3=-\frac{\kappa (r-2) \tau  T_3}{\left(\kappa^2 (r-1)+\tau ^2\right)}\,.
\end{equation}
Differentiating with respect to $s$ and using \eqref{BprimeTprime} we deduce that
\[
\left (- \tau+\frac{\ell}{2}\right ) N_3=-\frac{\kappa^2 (r-2) \tau  }{\left(\kappa^2 (r-1)+\tau ^2\right)}\,N_3\,.
\]
Now, since $N_3 \neq 0$, we easily deduce that
\begin{equation}\label{ell-BCV}
\ell=\frac{2 \tau  \left(\kappa^2+\tau ^2\right)}{\kappa^2 (r-1)+\tau ^2}\,.
\end{equation}
Finally, inserting \eqref{B3-BCV} and \eqref{ell-BCV} into \eqref{TAUrN-BCV} and simplifying we find
\[
\tau_{r,N}=-\frac{\kappa^2 (r-1) \left(k^2+\tau ^2\right)^2}{\kappa^2 (r-1)+\tau ^2}\,.
\]
Therefore, since this quantity cannot vanish, we conclude that there are no solutions with $N_3\neq 0$, as required to end part (i).

(ii) From \eqref{BprimeTprime} we have $T_3'=\kappa \,N_3$ and so the conclusion follows from part (i).
Finally, if $N_3=B_3=0$ then $T=E_3$ and $\gamma$ would be a geodesic.\end{proof}

Next, we prepare the ground to give explicit parametrizations of $r$-harmonic $3$-Frenet helices. 

Let us assume that $\gamma(s)=(x(s),y(s),z(s))$ is a non-geodesic arc-length parametrized curve immersed in a BCV-space $M_{m,\ell}^3$ with $4m \neq \ell^2$ and satisfying $N_3=0$. Then, following the computations of \cite[\S 5.2]{Caddeo-altri-Mediterr-2006}, we have that there exists a constant $\alpha_o\in(0,\pi)$ and a unique (up to an additive constant $2k\pi$) smooth function $\beta$ such that we can assume that the Frenet frame field along $\gamma$ is given by
\begin{equation}\label{frenetN3=0}
\begin{cases}
T(s)=\sin\alpha_o\cos\beta(s)E_1+\sin\alpha_o\sin\beta(s)E_2+\cos\alpha_o E_3 \,,\\
N(s)=-\sin\beta(s)E_1+\cos\beta(s)E_2 \,,\\
B(s)=T(s)\wedge N(s)=-\cos\alpha_o\cos\beta(s)E_1-\cos\alpha_o\sin\beta(s)E_2+\sin\alpha_o E_3 \,.
\end{cases}
\end{equation}
Moreover,  the curvature and the torsion of $\gamma(s)$ are given by
\begin{eqnarray}
\kappa(s)&=&\zeta \sin\alpha_o\,,\label{ks}\\
\tau(s)&=&\zeta\cos\alpha_o+\frac{\ell}{2}\,,\label{ts}
\end{eqnarray}
where 
\begin{equation}\label{beta(s)}
\zeta=|\beta'(s)+2m\sin\alpha_o\left[y\cos\beta(s)-x\sin\beta(s)\right]-\ell\cos\alpha_o|>0 \,.
\end{equation}
We point out that in \cite{Caddeo-altri-Mediterr-2006} the sign convention for $\tau$ is different because in \cite{Caddeo-altri-Mediterr-2006} the authors assumed
$$
\nabla_T B=\tau \,N \,.
$$

If we also require that $\gamma(s)$ is a $3$-Frenet helix, then $\zeta$ is constant. Furthermore, substituting the expressions \eqref{ks} and \eqref{ts} in \eqref{TAUrN-BCV} we conclude that any such \textcolor{red}{a} $3$-Frenet helix is proper $r$-harmonic if and only if the constant $\zeta$ is a positive root of the following polynomial of degree four:
\begin{eqnarray}\label{Pzeta}
&&P_4(\zeta)=8 \zeta^4+16 \ell \cos\alpha_o \zeta^3\\
&&+\left[\cos (2 \alpha_0 ) \left(16 m (r-1)-3 \ell^2 (r-2)\right)-16 m (r-1)+\ell^2 (3 r+4)\right]\zeta^2\nonumber\\
    &&+2 \ell \cos\alpha_o \left[\ell^2 
   -2r (4 m - \ell^2) \sin^2\alpha_o\right]\zeta-2 \ell^2\left(4m-\ell^2\right)\sin^2\alpha_o\,.
\nonumber\end{eqnarray}

\subsection{The case $m=0$}

When $m=0$ and $\ell\neq 0$ the BCV-space $M_{0,\ell}^3$ represents the Heisenberg space ${\rm Nil_3}$. More precisely, following the terminology of \cite{Daniel-Comm-Math-Helv-2007}, ${\rm Nil_3}$ is $\R^3$ endowed with the metric \eqref{CV}  with $m=0$. We point out that the standard metric on ${\rm Nil_3}$ corresponds to the choice $\ell=1$.
In this case, the polynomial \eqref{Pzeta} becomes
\begin{eqnarray}\label{polynomial-m=0}
P_4(\zeta)\lvert_{m=0}&=&8 \zeta^4+16 \ell \cos\alpha_o \zeta^3-\ell^2\left[ (3 (r-2) \cos (2 \alpha_o)-3 r-4)\right]\zeta^2\\\nonumber
    &&+2 \ell^3 \cos \alpha_o \left[1+2
   r \sin^2\alpha_o\right]\zeta+2 \ell^4\sin^2\alpha_o
\end{eqnarray}
while \eqref{beta(s)} reduces to 
\begin{equation}\label{eq:betaprimo-m=0}
\zeta=\left |\beta'(s)-\ell \cos\alpha_o \right |\,.
\end{equation}
If $\beta(s)=\beta_o$ is constant, then $\cos\alpha_o=\pm \zeta /\ell$. Replacing  $\cos\alpha_o=- \zeta /\ell$ in \eqref{polynomial-m=0} gives 
\[
-2\,\left(\zeta ^2-\ell^2\right) \left(\ell^2+\zeta ^2 (r-2)\right)\,.
\]
In this case, the only positive root is $\zeta=|\ell|$ which is not acceptable as we would have $\cos\alpha_o=\pm 1$. When we use $\cos\alpha_o= \zeta /\ell$ in \eqref{polynomial-m=0} we obtain the equation
\[
2 \ell ^4 \,\left(\frac{\zeta ^4}{\ell ^4} (18-5 r)+\frac{\zeta ^2}{\ell ^2}  (5 r-1)+1\right)=0\,.
\]
Then a routine analysis shows that, for any $r \geq 2$, there exists no acceptable solution.
\vspace{2mm}

The other possible case occurs when $\beta(s)$ is a non constant solution of \eqref{eq:betaprimo-m=0}, i.e.,
\begin{equation}\label{beta}
\beta(s)=(\pm \zeta+\ell \cos\alpha_o)s+c
\end{equation}
where $c\in\R$ is an integration constant. In this case, we already know that proper biharmonic $3$-Frenet helices do exist (see \cite{Caddeo-altri-Mediterr-2006, MR2129448}). Here we extend this result to the case $r \geq 3$. More precisely, we prove the following existence result for proper $r$-harmonic helices into the Heisenberg space:
\begin{theorem}\label{Th-Heis-existence} Let $\ell \neq 0, r \geq 2$. Then there exists a proper $r$-harmonic $3$-Frenet helix $\gamma:\R \to M_{0,\ell}^3$. The Frenet field of $\gamma$ is as in \eqref{frenetN3=0}, where $\beta(s)$ is a non constant function of the type \eqref{beta}.
\end{theorem}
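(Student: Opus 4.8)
The strategy is to reduce the statement to finding a positive root of the degree--four polynomial $P_4(\zeta)|_{m=0}$ of \eqref{polynomial-m=0} with $\cos\alpha_o$ in the admissible range, and then to locate such a root by an intermediate--value argument in $\alpha_o$. By the discussion preceding the theorem, once we exhibit an angle $\alpha_o\in(0,\pi)$ and a positive constant $\zeta$ with $P_4(\zeta)|_{m=0}=0$, the prescription \eqref{frenetN3=0} together with $\beta(s)=(\pm\zeta+\ell\cos\alpha_o)s+c$ as in \eqref{beta} defines a curve $\gamma$ along which $\zeta$ is constant (hence $\kappa=\zeta\sin\alpha_o$ and $\tau=\zeta\cos\alpha_o+\ell/2$ are constant) and $N_3\equiv0$; so $\gamma$ is a $3$-Frenet helix, it is proper because $\kappa>0$ (recall $\alpha_o\in(0,\pi)$), and by Proposition\link\ref{Prop-taur-BCV} we get $\tau_{r,B}=0$ from $N_3=0$ (see \eqref{TAUrB-BCV}) and $\tau_{r,N}=0$ because $\zeta$ is a root of $P_4|_{m=0}$; thus $\tau_r(\gamma)=0$. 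Since $(x,y,z)\mapsto(x,y,-z)$ is an isometry from $M^3_{0,-\ell}$ onto $M^3_{0,\ell}$, there is no loss in assuming $\ell>0$.

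The core of the proof is the following elementary but not obvious fact. Fix a constant $\zeta\in(0,\ell)$ with $\zeta\neq\ell/2$, and regard $P_4(\zeta)|_{m=0}$ as a continuous function of $\cos\alpha_o\in[-1,1]$ (using $\sin^2\alpha_o=1-\cos^2\alpha_o$ and $\cos(2\alpha_o)=2\cos^2\alpha_o-1$ in \eqref{polynomial-m=0}). I expect the evaluation at the two endpoints to simplify, with the dependence on $r$ disappearing, to
\[
P_4(\zeta)|_{m=0,\ \cos\alpha_o=-1}=2\zeta(\zeta-\ell)(2\zeta-\ell)^2\,,\qquad P_4(\zeta)|_{m=0,\ \cos\alpha_o=1}=2\zeta(\zeta+\ell)(2\zeta+\ell)^2\,.
\]
For $0<\zeta<\ell$ and $\zeta\neq\ell/2$ the first value is strictly negative while the second is strictly positive; hence, by the intermediate value theorem, there is a value of $\cos\alpha_o$ in $(-1,1)$ at which $P_4(\zeta)|_{m=0}$ vanishes. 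Taking $\alpha_o\in(0,\pi)$ to be the corresponding angle produces the pair $(\alpha_o,\zeta)$ required above, and this works uniformly for every $\ell>0$ --- hence, by the reduction, for every $\ell\neq 0$ --- and every $r\geq 2$.

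It then remains to realise this Frenet datum as an honest curve defined on all of $\R$. Since $m=0$ forces $F\equiv1$, the coordinate expression of $T$ and \eqref{frenetN3=0} show that $x,y,z$ are recovered by quadrature from $x'=\sin\alpha_o\cos\beta(s)$, $y'=\sin\alpha_o\sin\beta(s)$ and $z'=\cos\alpha_o-\frac{\ell}{2}(yx'-xy')$, with $\beta$ affine; these equations have solutions on the whole real line, so $\gamma:\R\to M^3_{0,\ell}$. One checks that the compatibility condition \eqref{beta(s)} holds automatically when $m=0$ (the terms involving $x,y$ drop out, leaving $\zeta=|\pm\zeta|$), and a direct computation confirms that $\gamma$ has the Frenet frame \eqref{frenetN3=0} with curvature and torsion \eqref{ks}--\eqref{ts}. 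Finally, to ensure that $\beta$ can be taken non-constant as in \eqref{beta} it suffices to note that the two admissible slopes $\pm\zeta+\ell\cos\alpha_o$ differ by $2\zeta\neq0$, so at least one of them is nonzero.

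The only genuinely non-routine step I anticipate is the explicit evaluation of $P_4(\zeta)|_{m=0}$ at $\cos\alpha_o=\pm1$ and the recognition of the two factorisations above, which are exactly what makes the sign change --- and therefore the existence of a root, uniformly in $r$ --- transparent. Everything else is bookkeeping together with direct appeals to Proposition\link\ref{Prop-taur-BCV} and to the preparatory material already established in this section.
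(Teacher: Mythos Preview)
Your argument is correct and takes a genuinely different route from the paper's. Both proofs reduce to exhibiting a pair $(\alpha_o,\zeta)$ with $\alpha_o\in(0,\pi)$, $\zeta>0$ and $P_4(\zeta)|_{m=0}=0$, and both use an intermediate--value argument, but in \emph{different variables}. The paper fixes $\alpha_o$ and seeks a positive root in $\zeta$: after rescaling $y=\zeta/\ell$ it observes $P(0)>0$, $P(y)\to+\infty$, and then shows that $P(1/2)<0$ for all $\alpha_o$ in an explicit interval $(\alpha^*,\pi)$ near $\pi$; this pins down both a workable range of angles and a location for the root. You instead fix $\zeta\in(0,\ell)\setminus\{\ell/2\}$ and vary $\cos\alpha_o$, exploiting the fact that at $\cos\alpha_o=\pm1$ one has $\sin^2\alpha_o=0$ and $\cos(2\alpha_o)=1$, which kills every $r$--dependent coefficient in \eqref{polynomial-m=0}; the resulting endpoint values factor exactly as $2\zeta(\zeta\pm\ell)(2\zeta\pm\ell)^2$, giving an immediate sign change. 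Your approach is arguably cleaner --- the $r$--independence of the endpoint values makes the uniformity in $r$ transparent without any case analysis, and it yields at once an open set of admissible $\zeta$ --- while the paper's approach is more informative about \emph{which} angles $\alpha_o$ actually occur. Your handling of the residual points (the reduction to $\ell>0$ via an explicit isometry, the global integrability of $T$ when $m=0$, and the observation that at least one of the slopes $\pm\zeta+\ell\cos\alpha_o$ is nonzero so $\beta$ can be taken non-constant) is also sound.
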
 
\begin{proof} We assume $\ell>0$ (the case $\ell <0$ is analogous). We set $y=\zeta/\ell$ and rewrite the polynomial $P_4(\zeta)\lvert_{m=0}$ in \eqref{polynomial-m=0} as follows:
\[
\ell^4 \, P(y)\,,
\]
where
\begin{eqnarray*}\label{polynomial-m=0-bis}
P(y)&=&8 y^4+16  \cos\alpha_o y^3-\left[ (3 (r-2) \cos (2 \alpha_o)-3 r-4)\right]y^2\\\nonumber
    &&+2 \cos \alpha_o \left[1+2
   r \sin^2\alpha_o\right]y+2 \sin^2\alpha_o \,.
\end{eqnarray*}
Then, in order to complete the proof, it is sufficient to show that for all $r \geq 2$ there exists $0< \alpha_0 < \pi$ such that $P(y)$ admits a positive root.
To this purpose we observe that $\lim_{y \to +\infty}P(y)=+\infty$ and $P(0)>0$. Then it is enough to show that there exists $0< \alpha_0 < \pi$ such that $P(1/2)<0$ for all $r \geq 2$.  Now
\[
P(1/2)=\cos ^2\left(\frac{\alpha_0 }{2}\right) \big(-2 r \cos (2 \alpha_0 )+(r+2) \cos (\alpha_0 )+r+4 \big )\,.
\]
A direct inspection shows that, for any fixed $r\geq 2$, the equation 
\[
-2 r \cos (2 \alpha_0 )+(r+2) \cos (\alpha_0 )+r+4=0
\]
has a unique solution $\alpha^*\in(0,\pi)$ given by
\[
\cos (\alpha^* )=-\frac{\sqrt{49 r^2+68 r+4}-r-2}{8 r}\in\left(-1,-\frac{3}{4}\right)\,.
\]
Finally, a routine analysis now shows that, if  
$\alpha^*<\alpha_0<\pi$, then we have $P(1/2)<0$ as required to end the proof.

\end{proof}
\begin{remark} 
The explicit parametrization of a curve $\gamma$ as in Theorem\link\ref{Th-Heis-existence} can be easily obtained by direct integration of $T(s)$ (see \cite[Theorem 3]{MR2129448}).
\end{remark}
%

\subsection{The case $m\neq 0$}
When $m\neq 0$, the parametrizations of $3$-Frenet helices in $M_{m,\ell}^3$  satisfying $N_3=0$ was obtained in \cite[Theorem~5.6]{Caddeo-altri-Mediterr-2006}. More precisely, they proved:

\begin{theorem}\cite{Caddeo-altri-Mediterr-2006}\label{2006} Suppose that $\gamma(s)$ is an arc-length parametrized $3$-Frenet helix in a BCV-space $M_{m,\ell}^3$ with $4m\neq \ell^2$ and $m\neq 0$ satisfying $N_3=0$. Then the parametrization of $\gamma(s)$ is of one of the following three types:
\begin{enumerate}
\item[Type I]
\[
\begin{cases}
x(s)=\mu\sin\alpha_o\sin\beta(s)+c_1\,,\\
y(s)=-\mu\sin\alpha_o\cos\beta(s)+c_2\,,\\
z(s)=\frac{\ell}{4m}\beta(s)+\frac{1}{4m}\left(\left[4m-\ell^2\right]\cos\alpha_o-\ell\zeta\right)s\,,
\end{cases}
\]
where $\mu>0$ and $c_1$, $c_2$ are constants satisfying
\[
c_1^2+c_2^2=\frac{\mu}{m}\left(\left[\ell\cos\alpha_o+\zeta-\frac{1}{\mu}\right]+m\mu\sin^2\alpha_o\right)
\]
and $\beta(s)$ is  a nonconstant solution of
\[
\beta'=c_1 \sin\beta-c_2 \cos\beta+2m\mu\sin^2\alpha_o+\zeta+\ell \cos\alpha_o\,.
\]
\item[Type II] $\,$

\noindent If $\beta(s)=\beta_o$ is a constant such that $\sin\beta_o\cos\beta_o\neq 0$,
\[
\begin{cases}
x(s)=x(s)\,,\\
y(s)=x(s)\tan\beta_o+c_1\,,\\
z(s)=\frac{1}{4m}\left(\left[4m-\ell^2\right]\cos\alpha_o-\ell\zeta\right)s+c_2\,,
\end{cases}
\]
where $c_2\in\mathbb{R}$, the constant $c_1$ is given by
$$c_1=\frac{\zeta+\ell \cos\alpha_o}{2m\sin\alpha_o\cos\beta_o}$$
and $x(s)$ is a solution of the ordinary differential equation
$$x'(s)=\left(1+m\left[x^2(s)+\left(x(s)\tan\beta_o+c_1\right)^2\right]\right)\sin\alpha_o\cos\beta_o\,.$$
\item[Type III]$\,$

\noindent Let $\beta(s)=\beta_o$ be a constant satisfying $\sin\beta_o\cos\beta_o=0$. Here we describe the case that $\beta_0=\pi/2$, the case $\beta_0=0$ is similar up to interchanging the roles of $x$ and $y$:
\[
\begin{cases}
x(s)=x_o=- \frac{\zeta+\ell\cos\alpha_o}{2m\sin\alpha_o}\,,\\
y(s)=y(s)\,,\\
z(s)=\frac{1}{4m}\left(\left[4m-\ell^2\right]\cos\alpha_o-\ell\zeta\right)s+c_1\,
\end{cases}
\]
\noindent for a constant $c_1\in\mathbb{R}$ and where $y(s)$ is a solution of the ordinary differential equation
\begin{equation}\label{ODE-TypeIII}
\left(y'(s)\right)^2=\left(1+m\left[x_o^2+y^2(s)\right]\right)^2\sin^2\alpha_o\,.
\end{equation}
\end{enumerate}
\end{theorem}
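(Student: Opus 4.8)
The plan is to reconstruct $\gamma=(x,y,z)$ from its Frenet apparatus. By the discussion preceding the statement we already know that $N_3\equiv0$ forces $T_3=\cos\alpha_o$ to be a constant (indeed $T_3'=\langle\nabla_TT,E_3\rangle=\kappa N_3=0$ by~\eqref{CNil}), that the Frenet frame has the normal form~\eqref{frenetN3=0}, with $\kappa,\tau,\zeta$ given by~\eqref{ks}--\eqref{beta(s)}, and that the helix hypothesis makes $\zeta$ a constant (since $\kappa=\zeta\sin\alpha_o$ with $\sin\alpha_o\ne0$). Matching~\eqref{frenetN3=0} against the coordinate expression of $T$ produces the first-order system
\[
x'=F\sin\alpha_o\cos\beta,\qquad y'=F\sin\alpha_o\sin\beta,\qquad z'=\cos\alpha_o-\frac{\ell}{2}\sin\alpha_o\,\eta ,
\]
where $F=1+m(x^2+y^2)$ and $\eta:=y\cos\beta-x\sin\beta$, while~\eqref{beta(s)} reads $\beta'=\varepsilon\zeta+\ell\cos\alpha_o-2m\sin\alpha_o\,\eta$ for a fixed sign $\varepsilon\in\{\pm1\}$ which I normalise to $\varepsilon=+1$ by reversing the orientation of $\gamma$ if needed. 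The proof then splits according to whether $\beta$ is constant.

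\emph{Non-constant $\beta$ (Type I).} The crux is to extract a first integral. Differentiating $\eta$ and substituting the system gives $\eta'=-\beta'(x\cos\beta+y\sin\beta)$; combining this with $(x^2+y^2)'=2F\sin\alpha_o(x\cos\beta+y\sin\beta)$ and $\beta''=-2m\sin\alpha_o\,\eta'$ collapses everything to $F'/F=\beta''/\beta'$, whence $F=\mu\,|\beta'|$ for some constant $\mu>0$; since $F>0$, $\beta'$ cannot vanish, so $\beta$ is strictly monotone and $F=\mu\beta'$. Feeding $F=\mu\beta'$ into $x'$ and $y'$ and integrating yields $x=\mu\sin\alpha_o\sin\beta+c_1$ and $y=-\mu\sin\alpha_o\cos\beta+c_2$; substituting these back (using $2m\sin\alpha_o\,\eta=\zeta+\ell\cos\alpha_o-\beta'$) into the equation for $z'$ and integrating gives the stated expression for $z$ as an affine function of $s$ plus a multiple of $\beta$, while re-reading $F=\mu\beta'$ as an identity in $\beta$ produces simultaneously the first-order ODE for $\beta$ and the quadratic constraint relating $c_1,c_2,\mu,\alpha_o,\zeta$. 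This is Type~I.

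\emph{Constant $\beta\equiv\beta_o$ (Types II and III).} Here $\beta'=0$ forces $\eta=y\cos\beta_o-x\sin\beta_o=(\zeta+\ell\cos\alpha_o)/(2m\sin\alpha_o)$ to be constant ($m\ne0$), so the projection $(x,y)$ lies on a Euclidean line and $z'$ is constant. If $\sin\beta_o\cos\beta_o\ne0$, solving $\eta=\mathrm{const}$ for $y$ gives $y=x\tan\beta_o+c_1$ with the asserted $c_1$, and then $x'=F\sin\alpha_o\cos\beta_o$ with $F=1+m\!\left[x^2+(x\tan\beta_o+c_1)^2\right]$ is precisely the ODE of Type~II, $z$ being affine in $s$. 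If $\sin\beta_o\cos\beta_o=0$, say $\beta_o=\pi/2$ (the remaining values being analogous after interchanging $x$ and $y$), constancy of $\eta$ forces $x\equiv x_o=-(\zeta+\ell\cos\alpha_o)/(2m\sin\alpha_o)$ and then $y'=F\sin\alpha_o$ with $F=1+m(x_o^2+y^2)$, i.e.\ $\bigl(y'\bigr)^2=\bigl(1+m[x_o^2+y^2]\bigr)^2\sin^2\alpha_o$, which is~\eqref{ODE-TypeIII}. This is Type~III, and since the three cases are mutually exclusive and exhaustive, the classification follows.

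The main obstacle is the first integral $F=\mu|\beta'|$ in the non-constant case: recognising that $F$ and $\beta'$ have the same logarithmic derivative is exactly what decouples the otherwise genuinely nonlinear system for $(x,y,\beta)$ and reduces the problem to explicit quadratures. A secondary, purely bookkeeping, difficulty is tracking the sign $\varepsilon$ arising from the absolute value in~\eqref{beta(s)} — together with the corresponding sign choices when integrating $x'$ and $y'$ — which one absorbs by allowing orientation reversals of $\gamma$.
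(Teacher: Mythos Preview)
The paper does not prove this theorem: it is quoted from \cite{Caddeo-altri-Mediterr-2006} (note the citation attached to the theorem header) and is invoked only as a black box in the proof of Theorem~\ref{others}. There is therefore no in-paper argument to compare your attempt against.

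Your reconstruction is sound and follows the natural route: read off the first-order system for $(x',y',z')$ from the normal form~\eqref{frenetN3=0} of $T$, then split on whether $\beta$ is constant. The only genuinely nontrivial step is the first integral $F=\mu\,|\beta'|$ in Type~I, and your derivation via $F'/F=\beta''/\beta'$ is correct. Two small points are worth tightening. First, the sentence ``since $F>0$, $\beta'$ cannot vanish'' is telegraphic: what you actually use is that on any maximal subinterval where $\beta'\neq0$ the relation $F=\pm\mu\beta'$ holds, and if $\beta'$ approached zero at an endpoint then so would $F$, contradicting $F>0$; hence $\beta'$ is nowhere zero and has constant sign. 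Second, when you substitute $x=\mu\sin\alpha_o\sin\beta+c_1$, $y=-\mu\sin\alpha_o\cos\beta+c_2$ back into $\beta'=\zeta+\ell\cos\alpha_o-2m\sin\alpha_o\,\eta$ you obtain
\[
\beta'=2m\sin\alpha_o\,(c_1\sin\beta-c_2\cos\beta)+2m\mu\sin^2\alpha_o+\zeta+\ell\cos\alpha_o,
\]
which carries an extra factor $2m\sin\alpha_o$ in front of $c_1\sin\beta-c_2\cos\beta$ compared with the displayed ODE in the statement. Your computation is the consistent one (indeed, without that factor the ODE, the formulas for $x,y$, and the quadratic constraint on $c_1,c_2$ cannot all hold simultaneously unless $2m\sin\alpha_o=1$); the discrepancy looks like a transcription slip in the quoted statement, and you should flag it rather than force your formula to match.
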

Now we are in the right position to summarize our analysis in the following result:
\begin{theorem}\label{others} Let $r\geq 2$, $4m\neq \ell^2$, $m\neq 0$ and assume that $\zeta$ is a positive root of the polynomial \eqref{Pzeta}. Then
\begin{itemize}
\item[{\rm (i)}] any $3$-Frenet helix as in Theorem\link\ref{2006} is proper $r$-harmonic in $M_{m,\ell}^3$;
\item[{\rm (ii)}] conversely, any proper $r$-harmonic $3$-Frenet helix in $M_{m,\ell}^3$ is of this form.
\end{itemize}
\end{theorem}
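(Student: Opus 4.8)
The plan is to obtain Theorem~\ref{others} by assembling the structural results already at our disposal: Proposition~\ref{Prop-taur-BCV} for the shape of $\tau_r(\gamma)$, Lemma~\ref{Th-N3=0-BCV} for the reduction to $N_3\equiv 0$, the adapted Frenet frame \eqref{frenetN3=0} together with \eqref{ks}--\eqref{beta(s)}, the quartic \eqref{Pzeta}, and the parametrizations of Theorem~\ref{2006}. The point to keep in mind is that, \emph{for a $3$-Frenet helix with $N_3\equiv 0$}, the equation $\tau_r(\gamma)=0$ is equivalent to the single scalar condition $P_4(\zeta)=0$; once this equivalence is in place, the two assertions of the theorem are just the two directions of it.

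I would begin with part (ii). Let $\gamma$ be a proper $r$-harmonic $3$-Frenet helix in $M_{m,\ell}^3$ with $4m\neq\ell^2$ and $m\neq 0$. By Lemma~\ref{Th-N3=0-BCV} we have $N_3\equiv 0$ along $\gamma$, with $T_3$ and $B_3$ constant and $B_3\neq 0$; in particular $\gamma$ satisfies the hypotheses of Theorem~\ref{2006}, so it admits a parametrization of Type~I, II, or III, carrying some constants $\alpha_o\in(0,\pi)$ and $\zeta$ (the latter constant because $\gamma$ is a helix, and positive by \eqref{beta(s)}). Since $\gamma$ is $r$-harmonic, Proposition~\ref{Prop-taur-BCV} gives $\tau_{r,N}=\tau_{r,B}=0$. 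The condition $\tau_{r,B}=0$ holds automatically by \eqref{TAUrB-BCV} because $N_3=0$, whereas, substituting \eqref{ks}--\eqref{ts} and $T_3=\cos\alpha_o$, $B_3=\sin\alpha_o$ (read off from \eqref{frenetN3=0}) into \eqref{TAUrN-BCV}, the condition $\tau_{r,N}=0$ becomes $P_4(\zeta)=0$ --- this is exactly the computation that led to \eqref{Pzeta}. Hence $\zeta$ is a positive root of \eqref{Pzeta} and $\gamma$ is of the asserted form.

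For part (i) I would run the same chain of implications in reverse. Given $\alpha_o\in(0,\pi)$ and a positive root $\zeta$ of \eqref{Pzeta}, let $\gamma$ be any $3$-Frenet helix of one of the three types of Theorem~\ref{2006} carrying these values of $\alpha_o$ and $\zeta$. By construction $\gamma$ satisfies $N_3\equiv 0$ and has Frenet frame \eqref{frenetN3=0}, so $T_3=\cos\alpha_o$ and $B_3=\sin\alpha_o$ are constant, and by \eqref{ks}--\eqref{ts} its curvature $\kappa=\zeta\sin\alpha_o$ and torsion $\tau=\zeta\cos\alpha_o+\ell/2$ are constant; since $\zeta>0$ and $\sin\alpha_o>0$ we have $\kappa>0$, so $\gamma$ is a genuine $3$-Frenet helix and is not a geodesic. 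By Proposition~\ref{Prop-taur-BCV}, $\tau_r(\gamma)=\tau_{r,N}N+\tau_{r,B}B$; since $N_3=0$, \eqref{TAUrB-BCV} gives $\tau_{r,B}=0$, while substituting \eqref{ks}--\eqref{ts} into \eqref{TAUrN-BCV} yields $\tau_{r,N}$ equal to a nonzero constant multiple of $P_4(\zeta)$ (one checks that it equals $-\tfrac18 P_4(\zeta)$), hence $\tau_{r,N}=0$. Therefore $\tau_r(\gamma)=0$ and $\gamma$ is proper $r$-harmonic, as claimed.

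As for the difficulty, Theorem~\ref{others} itself is essentially a synthesis and carries no genuine obstacle: the only substantive ingredient is Lemma~\ref{Th-N3=0-BCV}, which forces $N_3\equiv 0$ and thereby makes Theorem~\ref{2006} applicable --- and that lemma has already been proved above, by the contradiction argument that uses $\tau_{r,B}=0$ to solve for $B_3$, differentiates via \eqref{BprimeTprime}, and shows that the residual constraint makes $\tau_{r,N}$ strictly negative. The remaining steps --- identifying $\tau_{r,N}$ with a multiple of $P_4(\zeta)$, and noting that a $3$-Frenet helix automatically has $\kappa>0$ and hence is never a geodesic --- are routine, and the symmetry of the equivalence $\tau_r(\gamma)=0\iff P_4(\zeta)=0$ delivers both (i) and (ii) at once.
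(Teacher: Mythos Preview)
Your proposal is correct and follows essentially the same route as the paper: part~(i) via Proposition~\ref{Prop-taur-BCV} together with the derivation of $P_4(\zeta)$ in \eqref{Pzeta}, and part~(ii) via Lemma~\ref{Th-N3=0-BCV} and Theorem~\ref{2006}. The paper's own proof is little more than a pointer to these ingredients, whereas you have unpacked them in full; in particular your observation that $\tau_{r,B}=0$ is automatic from $N_3=0$ and that $\tau_{r,N}$ is a constant multiple of $P_4(\zeta)$ makes the equivalence $\tau_r(\gamma)=0\iff P_4(\zeta)=0$ explicit.
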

\begin{proof} The first statement is a consequence of Proposition\link\ref{Prop-taur-BCV} and the subsequent above derivation of the polynomial $P_4(\zeta)$ in \eqref{Pzeta}. 

The converse follows readily from Lemma\link\ref{Th-N3=0-BCV} and Theorem\link\ref{2006}.
\end{proof}
We point out that, despite Theorem\link\ref{others}, an explicit description of all the $r$-harmonic $3$-Frenet helices in the case $4m\neq \ell^2$ and $m\neq 0$ is not an easy task. The difficulty is to understand when, given $m \neq 0, \ell$ and $ r \geq 2 $, there exists an angle $0 < \alpha_0 < \pi$ such that the polynomial $P_4(\zeta)$ defined in \eqref{Pzeta} admits a positive root.  

A simple case occurs when the BCV-space is a product. More precisely, in the case $\ell=0$ the polynomial \eqref{Pzeta} becomes:
$$
P_4(\zeta)\lvert_{\ell=0}=8 \zeta ^2 \big[\zeta ^2-4 \ m (r-1) \sin ^2 \alpha_0 \big ]\,.
$$
Then it is immediate to deduce the following corollary:

\begin{corollary}\label{Cor-H2xR-TypeIII}
{\rm (i)} Let $m<0$, $r \geq 2$. Then there exists no proper $r$-harmonic $3$-Frenet helix in $\h^2(4m) \times \R$.

{\rm (ii)} Let $m>0$, $r \geq 2$. Then there exist proper $r$-harmonic $3$-Frenet helices of Types I, II and III in $\s^2(\frac{1}{2\sqrt{m}}) \times \R$.
\end{corollary}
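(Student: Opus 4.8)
The plan is to derive the explicit form of the quartic $P_4(\zeta)$ when $\ell=0$ and then analyze when it admits a positive root, separately in the two cases $m<0$ and $m>0$. First I would substitute $\ell=0$ directly into \eqref{Pzeta}. The terms containing a factor $\ell$ (the $\zeta^3$ term, the $\zeta$ term, and the constant term) all vanish, leaving only
\[
P_4(\zeta)\lvert_{\ell=0}=8\zeta^4+\left[16 m(r-1)\cos(2\alpha_0)-16m(r-1)\right]\zeta^2
=8\zeta^2\big[\zeta^2-4m(r-1)\sin^2\alpha_0\big]\,,
\]
using $1-\cos(2\alpha_0)=2\sin^2\alpha_0$. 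This is the displayed formula, so this step is routine.

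Next I would read off the positive roots of $P_4(\zeta)\lvert_{\ell=0}$. Since $\zeta>0$ is required by \eqref{beta(s)}, the factor $8\zeta^2$ never vanishes, and the question reduces to whether $\zeta^2=4m(r-1)\sin^2\alpha_0$ has a solution with $\zeta>0$ and $\alpha_0\in(0,\pi)$. For (i), when $m<0$ the right-hand side is $\leq 0$ (and $<0$ unless $\sin\alpha_0=0$, which is excluded), so there is no positive root $\zeta$; by Theorem~\ref{others}(ii) — more precisely by its proof, which shows that a proper $r$-harmonic $3$-Frenet helix with $N_3=0$ forces $\zeta$ to be a positive root of $P_4$ — together with Lemma~\ref{Th-N3=0-BCV} which guarantees $N_3\equiv 0$, no proper $r$-harmonic $3$-Frenet helix can exist in $\h^2(4m)\times\R$. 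For (ii), when $m>0$ we may simply pick any $\alpha_0\in(0,\pi)$, say $\alpha_0=\pi/2$, and set $\zeta=2\sqrt{m(r-1)}>0$; this is an admissible positive root, and then Theorem~\ref{others}(i) produces proper $r$-harmonic $3$-Frenet helices of Types I, II and III in $\s^2(\tfrac{1}{2\sqrt{m}})\times\R$, since the three types in Theorem~\ref{2006} exhaust the parametrizations with $N_3=0$ and each occurs for suitable choices of the remaining free parameters ($\beta(s)$ nonconstant for Type I, $\beta_0$ with $\sin\beta_0\cos\beta_0\neq 0$ for Type II, $\beta_0=\pi/2$ or $0$ for Type III).

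I expect no serious obstacle here: the whole statement is essentially a corollary of the sign of $4m(r-1)\sin^2\alpha_0$. The only point requiring a little care is the assertion in (ii) that helices of \emph{all three} types genuinely occur — one must check that, with $m>0$ fixed, the constraints in Theorem~\ref{2006} (the relation on $c_1^2+c_2^2$ in Type I, the formula for $c_1$ in Type II, the value of $x_o$ in Type III) can be met with $\zeta=2\sqrt{m(r-1)}$ and a suitable angle $\alpha_0$, and that the associated ODEs for $\beta$, $x$, or $y$ admit solutions of the required kind; but these are exactly the parametrizations already established in \cite{Caddeo-altri-Mediterr-2006} and impose no new restriction once the pair $(\kappa,\tau)$ coming from $\zeta$ and $\alpha_0$ via \eqref{ks}--\eqref{ts} is fixed. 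Hence the corollary follows immediately from the factorization of $P_4(\zeta)\lvert_{\ell=0}$ combined with Lemma~\ref{Th-N3=0-BCV} and Theorems~\ref{2006} and~\ref{others}.
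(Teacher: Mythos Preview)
Your proposal is correct and follows exactly the paper's approach: the paper simply records the factorization $P_4(\zeta)\lvert_{\ell=0}=8\zeta^2\big[\zeta^2-4m(r-1)\sin^2\alpha_0\big]$ and declares the corollary ``immediate to deduce'', while you spell out the sign analysis and the appeal to Lemma~\ref{Th-N3=0-BCV} and Theorems~\ref{2006}--\ref{others} that make it so. If anything, your version is more detailed than the paper's, particularly in flagging the (routine) check that all three Types genuinely occur in case (ii).
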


\begin{remark} The cases $r=2,3$ of Theorem\link\ref{others} were obtained in \cite{Caddeo-altri-Mediterr-2006, MR4308322} respectively. 

We also point out here that it was proved in \cite{Caddeo-altri-Mediterr-2006} that any biharmonic curve into the product space $\h^2(4m) \times \R$ is a geodesic, and it was deduced in \cite{MR4308322} that there exists no proper triharmonic helix in $\h^2(4m) \times \R$.
\end{remark}
Another consequence of Theorem\link\ref{others} is the following:
\begin{corollary}\label{Cor-BCV-general} Assume that $\ell \neq 0$ and $4m - \ell^2 >0$. Let $r\geq 2$. Then there exist proper $r$-harmonic $3$-Frenet helices in $M_{m,\ell}^3$ of Types I, II, III as in Theorem\link\ref{2006}.
\end{corollary}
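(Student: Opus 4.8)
The plan is to use Theorem~\ref{others} to reduce the statement to elementary facts about the quartic $P_4(\zeta)$ of \eqref{Pzeta}. First I would note that the hypotheses $\ell\neq0$ and $4m-\ell^2>0$ force $4m>\ell^2>0$, hence $m>0$; in particular $m\neq0$ and $4m\neq\ell^2$, so Theorem~\ref{others} is applicable, and moreover $F=1+m(x^2+y^2)>0$ on all of $\R^2$, so $\bar M=\R^2$ and the BCV-metric \eqref{CV} is defined on all of $\R^3$; consequently no domain restriction will interfere with the parametrizations of Theorem~\ref{2006}. By Theorem~\ref{others}(i) it therefore suffices to produce, for some $\alpha_o\in(0,\pi)$, a positive root $\zeta_o$ of $P_4(\zeta)$ together with a curve of each of the three types of Theorem~\ref{2006} having parameters $(\alpha_o,\zeta_o)$.

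The positive root is immediate: for every $\alpha_o\in(0,\pi)$ the constant term of $P_4(\zeta)$, namely $-2\ell^2\left(4m-\ell^2\right)\sin^2\alpha_o$, is strictly negative, so $P_4(0)<0$, while the leading coefficient of $P_4$ is $8>0$, so $P_4(\zeta)\to+\infty$ as $\zeta\to+\infty$; the intermediate value theorem yields $\zeta_o>0$ with $P_4(\zeta_o)=0$. Fixing such a pair $(\alpha_o,\zeta_o)$, Theorem~\ref{others}(i) ensures that any $3$-Frenet helix of Theorem~\ref{2006} built from these data is proper $r$-harmonic, so only the realisation of the three types remains. For Type~III I would set $\beta_o=\pi/2$, so that $x_o$ is determined by $(\alpha_o,\zeta_o)$ and, since $m>0$, equation \eqref{ODE-TypeIII} becomes $y'=\pm\left(1+m[x_o^2+y^2]\right)\sin\alpha_o$, a first-order ODE with polynomial right-hand side, hence solvable on a neighbourhood of $0$. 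For Type~II I would take any $\beta_o$ with $\sin\beta_o\cos\beta_o\neq0$, for instance $\beta_o=\pi/4$: this determines $c_1$, and the ODE for $x(s)$, again with polynomial right-hand side, has a local solution. For Type~I I would first choose $\mu>0$ large enough that $\frac{\mu}{m}\left(\ell\cos\alpha_o+\zeta_o-\frac{1}{\mu}+m\mu\sin^2\alpha_o\right)>0$, which is possible since this quantity tends to $+\infty$ as $\mu\to+\infty$ (because $m,\sin^2\alpha_o>0$), then pick $(c_1,c_2)$ on the circle of that radius, and finally solve the ODE for $\beta$ with an initial value satisfying $\beta'(0)\neq0$, which is possible because $c_1\sin\beta-c_2\cos\beta$ is non-constant; this produces the non-constant $\beta$ required by Type~I.

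I do not expect a serious obstacle: the one substantive point, the existence of a positive root of $P_4$, is settled at the outset by the sign of its constant term, and the rest is bookkeeping on top of Theorems~\ref{others} and~\ref{2006}. The step needing a little care is the Type~I construction, where $\mu$ must be tuned so that the admissibility relation for $(c_1,c_2)$ holds with positive radius and so that $\beta$ can be taken non-constant; this, and the local solvability of the Type~II and Type~III ODEs, are routine once one exploits that for $m>0$ the ambient metric is globally defined on $\R^3$.
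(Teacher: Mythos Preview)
Your argument is correct and the key step---showing $P_4(0)<0$ from the sign of the constant term $-2\ell^2(4m-\ell^2)\sin^2\alpha_o$ and $P_4(\zeta)\to+\infty$, then applying the intermediate value theorem---is exactly what the paper does. You go further than the paper by explicitly checking that curves of each of the three Types can be realised for the resulting $(\alpha_o,\zeta_o)$; the paper leaves this implicit in its invocation of Theorem~\ref{2006}, so your version is more complete but not different in spirit.
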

\begin{proof}
It suffices to observe that under the assumptions of this corollary the polynomial $P_4(\zeta)$ in \eqref{Pzeta} satisfies $P_4(0)<0$ and $\lim_{\zeta \to + \infty}P_4(\zeta)=+\infty$. Then $P_4(\zeta)$ admits a positive root in all these cases.
\end{proof}
\begin{remark} We carried out some numerical studies which showed the existence of proper $r$-harmonic $3$-Frenet helices in $M_{m,\ell}^3$ also in the case that $4m-\ell^2<0$. For instance, using an argument similar to the proof of Theorem\link\ref{Th-Heis-existence}, we found existence when $m=1/4,\,\ell=2$, and also when $m=-1,\,\ell=1$. Since these arguments have a rather technical nature and do not open any new interesting path we omit further details.
\end{remark}
\section{$r$-harmonic full $n$-Frenet helices in $\s^m$}\label{n-FrenethelicesinSn}
 
In Theorem~\ref{Th-non-existence-r-curves} we obtained a non-existence result for full, $r$-harmonic $n$-Frenet curves in $\s^m$, $m \geq n$, under the assumption $n \geq 2r$. 

In this section we shall show that the hypothesis $n \geq 2r$ can be weakened if we assume that the curve is a helix. In this case, the proof of non-existence becomes equivalent to the fact that a system of polynomial equations admits no acceptable solution. To achieve this, in some cases we shall use the notion of a Groebner basis.

In order to describe in detail the general situation, let us first recall that the cases $n=3,\,r \geq 2$ have been fully understood in Theorem 1.1 and Corollary 1.4 of \cite{MR4542687}. Also the case $n=2$ is completely understood. Indeed, the full, $r$-harmonic $2$-Frenet helices in $\s^m$ are determined by the condition $\kappa^2=r-1$ and are the well-known circles of radius $R=1/\sqrt {r}$ (see \cite{MR3711937} and references therein).

By way of summary, the open cases are $n\geq 4, \,r \geq 3,\,n < 2r$. 

Our first result in this context is non-existence for the triharmonic case. Indeed,
\begin{theorem}\label{Th-Non-exist-3-harmonic-full-Frenet-helices}
Let $\gamma$ be a full $n$-Frenet helix in $\s^m$, $m\geq n \geq 4$. Then $\gamma$ cannot be triharmonic.
\end{theorem}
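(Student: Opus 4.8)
The plan is to compute the triharmonic tension field $\tau_3(\gamma)$ for a full $n$-Frenet helix in $\s^m$ explicitly in the Frenet frame $\{F_1,\ldots,F_n\}$ and show that its component along $F_6$ cannot vanish, thereby forcing a contradiction when $n\geq 6$, while the cases $n=4$ and $n=5$ require a finer separate analysis. First I would use the curvature tensor formula \eqref{Curv-tensor-sfera} together with the iterated covariant derivatives \eqref{Nablaelle} to write $\nabla_T^k T$ for $k\leq 5$ as an explicit combination of $F_1,\ldots,F_{k+1}$ with constant coefficients (since the curvatures $\kappa_i$ are now constant, the coefficients $a_j(s)$ become explicit polynomials in the $\kappa_i$). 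Plugging these into the triharmonic equation \eqref{r-harmonicity-curves} with $r=3$, namely $\tau_3(\gamma)=\nabla_T^5 T + R(\nabla_T^3 T, T)T - R(\nabla_T^2 T, \nabla_T T)T$, and using that on $\s^m$ one has $R(X,Y)T = -\langle X,T\rangle Y + \langle Y,T\rangle X$, yields $\tau_3(\gamma)$ as an explicit combination $\sum_{j=1}^{6} c_j F_j$ with the $c_j$ polynomial in $\kappa_1,\ldots,\kappa_5$.

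The key observation is that the coefficient $c_6$ of $F_6$ comes only from the top-order term $\nabla_T^5 T$, because the curvature terms $R(\nabla_T^3 T,T)T$ and $R(\nabla_T^2 T,\nabla_T T)T$ live in $\mathrm{Span}\{F_1,\ldots,F_4\}$. From \eqref{Nablaelle} we get $c_6 = \kappa_1\kappa_2\kappa_3\kappa_4\kappa_5$, which is nonzero for a full helix of order $n\geq 6$; hence $\tau_3(\gamma)\neq 0$ and the theorem holds immediately in that range. This is essentially the same mechanism as in the proof of Theorem~\ref{Th-non-existence-r-curves}, but now we must push one order further since $2r=6$ and we are in the borderline/sub-borderline regime.

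For $n=4$ and $n=5$ the top coefficient $c_6$ (respectively $c_6=c_5=0$ in a suitable reading) no longer obstructs, so one must examine the full system $c_1=\cdots=c_{n}=0$. Here I would write out the coefficients $c_j$ explicitly — they are polynomials in $\kappa_1^2,\ldots$ with the odd-frame and even-frame structure inherited from the curvature contractions — and show the resulting polynomial system has no solution with all $\kappa_i>0$ (for $n=4$ one also admits $\kappa_3\in\R$, $\kappa_3\neq 0$ by fullness). The $F_5$ (and $F_4$) components give equations forcing relations among the $\kappa_i$; substituting these into the lower components yields a contradiction with positivity. This is where I expect the main obstacle: the elimination is not a one-line argument, and it is precisely here that the paper announces the use of Groebner bases — the strategy would be to feed the system $\{c_j=0\}$, together with inequalities $\kappa_i\neq 0$ encoded by introducing slack variables $\kappa_i t_i = 1$, into a Groebner basis computation and read off that the ideal contains $1$ (or a manifestly sign-definite polynomial), certifying non-existence. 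The delicate point is organizing the computation so the Groebner basis is tractable and the final contradiction (a positivity violation) is transparent.
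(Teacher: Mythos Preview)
Your proposal is correct and follows essentially the same strategy as the paper: reduce to the open cases $n=4$ and $n=5$ via the $F_6$-component argument (which is just Theorem~\ref{Th-non-existence-r-curves} with $r=3$), then analyze the resulting polynomial system in the $\kappa_i$.

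The one point of divergence is that you overestimate the difficulty of the $n=4,5$ step. The paper does \emph{not} use Groebner bases here; that machinery is reserved for the next theorem (Theorem~\ref{Th-Non-exist-4-harmonic-full-Frenet-helices}, the cases $r=4,5$). For triharmonicity the system is small enough to handle by hand: when $n=4$ only the $F_2$- and $F_4$-components are nontrivial, and the $F_4$-equation factors as $\kappa_1\kappa_2\kappa_3(\kappa_1^2+\kappa_2^2+\kappa_3^2-1)=0$, forcing $\kappa_1^2+\kappa_2^2+\kappa_3^2=1$; substituting this into the $F_2$-equation gives $(1+\kappa_3^2)(\kappa_2^2+\kappa_3^2-1)=0$, which contradicts $\kappa_1>0$. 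For $n=5$ the same two equations appear (with $\kappa_4^2$ added to the $F_4$-equation), and a short elementary elimination in the variables $x=\kappa_1^2,\ldots,t=\kappa_4^2$ yields a sign contradiction. So your plan works, but the Groebner-basis step is unnecessary for this theorem.
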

\begin{proof}
First, we observe that according to the previous discussion the open cases are $n=4$ and $n=5$.

Now, a simple computation shows that, for all $\ell \geq 0$, we can write
\begin{equation}\label{b[l,j]}
\nabla_T^\ell T=\sum_{j=1}^n b[\ell,j] F_j
\end{equation}
where the coefficients $b[\ell,j]$ are defined recursively as follows:
\begin{equation*}
b[0,j]=\delta_{1j}\,; \quad b[\ell,j]=\kappa_{j-1}\, b[\ell-1,j-1]-\kappa_j \,b[\ell-1,j+1] 
\end{equation*}
with boundary conditions $\kappa_0=\kappa_{-1}=\kappa_n=0$. Using this expression of $\nabla_T^\ell T$ and \eqref{Curv-tensor-sfera} we can easily compute the tension field $\tau_r(\gamma)$ as prescribed by \eqref{r-harmonicity-curves}. 

We study the two cases separately.

\textbf{Case $n=4$:}

The components of $\tau_3(\gamma)$ with respect to the base  $\{F_1,\ldots,F_4 \}$ are
\[
 \left(0,\kappa_1 \Big [\kappa_1^4+2 \kappa_1^2 (-1+\kappa_2^2)+\kappa_2^2 (-1+\kappa_2^2+\kappa_3^2)\Big],0,-\kappa_1 \kappa_2 \kappa_3 (-1+\kappa_1^2+\kappa_2^2+\kappa_3^2)\right )\,.
\]
Thus the condition of triharmonicity is equivalent to the following polynomial system:
\begin{equation}\label{3-harm-4-Frenet-helix}
\left \{ 
\begin{array}{ll}
{\rm (i)}&\kappa_1^4+2 \kappa_1^2 (-1+\kappa_2^2)+\kappa_2^2 (-1+\kappa_2^2+\kappa_3^2)=0\\
&\\
{\rm (ii)}&\kappa_1^2+\kappa_2^2+\kappa_3^2=1 \,.
\end{array}
\right .
\end{equation}
Now, using \eqref{3-harm-4-Frenet-helix}(ii) into \eqref{3-harm-4-Frenet-helix}(i) and simplifying we obtain:
\[
\left (1+\kappa_3^2 \right ) \left (-1+\kappa_2^2+\kappa_3^2 \right )=0 \,.
\]
Since $\kappa_1>0$, the last equation is not compatible with \eqref{3-harm-4-Frenet-helix}(ii) and so this case is ended.
\textbf{Case $n=5$:}

In this case the condition of triharmonicity is equivalent to the following polynomial system:
\begin{equation}\label{3-harm-5-Frenet-helix}
\left \{ 
\begin{array}{ll}
{\rm (i)}&\kappa_1^4+2 \kappa_1^2 (-1+\kappa_2^2)+\kappa_2^2 (-1+\kappa_2^2+\kappa_3^2)=0\\
&\\
{\rm (ii)}&\kappa_1^2+\kappa_2^2+\kappa_3^2+\kappa_4^2=1 \,.
\end{array}
\right .
\end{equation}
Next, to simplify the notation, we write
\[
x=\kappa_1^2 \,, \quad y=\kappa_2^2 \,, \quad z=\kappa_3^2 \,, \quad t=\kappa_4^2 \,.
\]
Then our system \eqref{3-harm-5-Frenet-helix} becomes:
\begin{equation}\label{3-harm-5-Frenet-helix-bis}
\left \{
\begin{array}{ll}
{\rm (i)}&x^2+2 x (-1+y)+y (-1+y+z)=0\\
&\\
{\rm (ii)}&x+y+z+t=1 \,.
\end{array}
\right .
\end{equation}
Now, from \eqref{3-harm-5-Frenet-helix-bis}(i) we deduce
\begin{equation}\label{z}
z=\frac{-x^2-2 x y+2 x-y^2+y}{y}\,.
\end{equation}
Using this condition in \eqref{3-harm-5-Frenet-helix-bis}(ii) we find
\[
-t y+x^2+x (y-2)=0\,.
\]
Now, if $t=x$, we have an immediate contradiction. If $t\neq x$, then
\begin{equation}\label{y}
y=\frac{x^2-2 x}{t-x}
\end{equation}
which, from \eqref{z}, gives
\begin{equation}\label{z-bis}
z=\frac{-t^2+t+x}{t-x}\,.
\end{equation}
Finally, we show that \eqref{y} and \eqref{z-bis} lead us to a contradiction. Indeed, $y>0$ implies $t<x$ from \eqref{y}. But $t<x$ and \eqref{z-bis} easily imply $z<0$, a contradiction and so the proof is completed.
\end{proof}
\vspace{3mm}

Using the explicit expression of $\tau_r{\gamma}$ which can always be computed using \eqref{b[l,j]} and \eqref{Curv-tensor-sfera} it is possible to investigate any instance which falls into the range of the remaining cases of interest, i.e., $n\geq 4, \,r \geq 4,\,n < 2r$.

A natural next step forward would be to attack the case of full, $r$-harmonic $4$-Frenet helices when $r\geq 4$. 

Here the computational complexity rapidly increases with $r$ and we propose a method of proof which uses the notion of a Groebner basis (see \cite{MR1213453}) and enables us to solve the cases where $r=4,5$.
More precisely, we obtain the following non-existence result:
\begin{theorem}\label{Th-Non-exist-4-harmonic-full-Frenet-helices}
Let $\gamma$ be a full $4$-Frenet helix in $\s^m$, $m\geq 4$. Then $\gamma$ cannot be $r$-harmonic, $r=4,5$.
\end{theorem}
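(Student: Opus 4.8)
The plan is to proceed exactly as in the proof of Theorem~\ref{Th-Non-exist-3-harmonic-full-Frenet-helices}: turn the $r$-harmonicity equation into a polynomial system in the squared curvatures and show that this system has no solution with all curvatures positive. Using the recursion \eqref{b[l,j]} for the coefficients in $\nabla_T^\ell T=\sum_{j=1}^{4} b[\ell,j]\,F_j$ together with the curvature formula \eqref{Curv-tensor-sfera}, one has the identity
\[
R\bigl(\nabla_T^{a}T,\nabla_T^{c}T\bigr)T=-\,b[a,1]\,\nabla_T^{c}T+b[c,1]\,\nabla_T^{a}T,
\]
so that, for a full $4$-Frenet helix, $\tau_r(\gamma)$ as given by \eqref{r-harmonicity-curves} is an explicit linear combination of $F_1,\dots,F_4$ whose coefficients are polynomials in $\kappa_1,\kappa_2,\kappa_3$. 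A parity argument (each $b[\ell,j]$ vanishes unless $\ell+j$ is odd, and in every surviving curvature term exactly one of $b[2r-3-\ell,1]$, $b[\ell,1]$ is nonzero) shows $\tau_r(\gamma)\in\mathrm{Span}\{F_2,F_4\}$; moreover the $F_2$-component is divisible by $\kappa_1$ and the $F_4$-component by $\kappa_1\kappa_2\kappa_3$, and after removing these factors only even powers of the $\kappa_i$ survive. Hence, writing $x=\kappa_1^2$, $y=\kappa_2^2$, $z=\kappa_3^2$, $r$-harmonicity of $\gamma$ is equivalent to a system $P_r(x,y,z)=0$, $Q_r(x,y,z)=0$ with $x,y,z>0$ (for $r=3$ this is precisely \eqref{3-harm-4-Frenet-helix}). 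I would first obtain $P_4,Q_4$ and $P_5,Q_5$ by a direct symbolic computation.

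For $r=4$ I expect bare-hands elimination to work, just as for $r=3$: solve one equation for the highest power of a chosen variable, substitute into the other, and factor. As in the triharmonic argument one must separately treat the loci on which the leading coefficient used in the substitution vanishes (the analogues of the ``$t=x$'' branch of Theorem~\ref{Th-Non-exist-3-harmonic-full-Frenet-helices}), but on each of these the system should visibly force some $\kappa_i$ to vanish or to become imaginary. The conclusion is that $P_4=Q_4=0$ has no point in the open positive octant.

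For $r=5$ the degrees are too large for a naive elimination, so the plan is to compute a Groebner basis of the ideal $I=(P_5,Q_5)\subset\mathbb{Q}[x,y,z]$ in a lexicographic order, after saturating $I$ by $xyz$ to discard the spurious components on which a curvature vanishes. The point to keep in mind, and the main obstacle, is that a Groebner basis over $\mathbb{Q}$ only describes the complex zero set, so the computation becomes a proof only once one extracts from it an explicit sign obstruction on the positive octant: either an element of the saturated ideal (possibly together with slack relations such as $x=u^2$) all of whose monomials carry signs incompatible with $x,y,z>0$, or a one-variable elimination polynomial whose positive roots are all excluded by back-substitution --- the analogue of the chain ``$y>0\Rightarrow t<x\Rightarrow z<0$'' used for the $5$-Frenet triharmonic case. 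Producing this certificate, and organizing the case analysis over the branches into which the Groebner basis decomposes $I$ so that each is eliminated cleanly, is where the real work lies; everything else is routine once the system is in hand.
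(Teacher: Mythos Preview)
Your setup is correct and the overall strategy --- reduce to a polynomial system in $x=\kappa_1^2$, $y=\kappa_2^2$, $z=\kappa_3^2$ and exclude positive solutions --- is exactly what the paper does. The parity argument and the factorizations you describe are also right. What is missing is the paper's main simplifying device, which it applies to \emph{both} $r=4$ and $r=5$: rather than working with the two-generator ideal $(P_r,Q_r)\subset\mathbb{Q}[\kappa_1,\kappa_2,\kappa_3]$ directly, it adjoins the auxiliary polynomial
\[
P_3(\kappa_1,\kappa_2,\kappa_3)=\kappa_1^2+\kappa_2^2+\kappa_3^2-c
\]
with a free positive parameter $c$ and computes a Groebner basis of $\langle P_r,Q_r,P_3\rangle$ over $\mathbb{Q}(c)$. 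This turns a one-dimensional variety in the positive octant into a zero-dimensional family parametrized by $c$, and the Groebner basis then \emph{solves} the system: one reads off $\kappa_3^2$, $\kappa_2^2$, $\kappa_1^2$ as explicit rational (or, for $r=5$, quadratic-irrational) functions of $c$, and the sign obstruction becomes a routine one-variable analysis on an interval of admissible $c$. For $r=4$ this yields a three-element basis and a two-line contradiction; for $r=5$ one gets a five-element basis, two branches for $\kappa_3^2$, and again an elementary sign check in $c$.

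By contrast, your plan for $r=5$ --- Groebner basis of $(P_5,Q_5)$ in lex order, saturated by $xyz$ --- leaves you with a curve rather than isolated points, and the ``certificate'' you rightly identify as the crux would have to be a genuine Positivstellensatz-type argument rather than a back-substitution chain. Nothing in your approach is wrong, but the auxiliary-parameter trick is what converts the hard step you flag into something mechanical; without it you are likely to find the $r=5$ elimination unpleasant. For $r=4$ your bare-hands elimination would succeed, but the paper already uses the Groebner-with-$c$ method there too, so the two cases are handled uniformly.
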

\begin{proof} We examine the two cases separately.
\vspace{2mm}

\textbf{Case $r=4$:}

A computation shows that the components of $\tau_4(\gamma)$ with respect to the Frenet field $\left \{F_1,\ldots,F_4 \right \}$ are 
\[
\Big(0,\kappa_1\,P_1(\kappa_1,\kappa_2,\kappa_3),
0,\kappa_1\kappa_2 \kappa_3 P_2(\kappa_1,\kappa_2,\kappa_3)\Big)\,.
\]
where
\[
\begin{split}
P_1(\kappa_1,\kappa_2,\kappa_3)=-\kappa_1^6-3 \left(\kappa_2^2-1\right) \kappa_1^4+\kappa_2^2 \left(-3 \kappa_2^2-2 \kappa_3^2+4\right) \kappa_1^2\\
+\kappa_2^2 \left(-\kappa_2^4+\left(1-2 \kappa_3^2\right) \kappa_2^2-\kappa_3^4+\kappa_3^2\right)
\end{split}
\]\,
and
\[
P_2(\kappa_1,\kappa_2,\kappa_3)=\left(\kappa_1^4+\left(2 \kappa_2^2+\kappa_3^2-2\right) \kappa_1^2+\kappa_2^4+\kappa_3^2 \left(\kappa_3^2-1\right)+\kappa_2^2 \left(2 \kappa_3^2-1\right)\right)\,.
\]
We have to show that the polynomial system of equations
\[
P_1(\kappa_1,\kappa_2,\kappa_3)=0,\quad P_2(\kappa_1,\kappa_2,\kappa_3)=0
\]
admits no solution with $\kappa_i>0, \,i=1,2,3$. 

It is convenient to introduce an auxiliary polynomial 
\[
P_3(\kappa_1,\kappa_2,\kappa_3)= \kappa_1^2+\kappa_2^2+\kappa_3^2 -c\,,
\]
where $c>0$ is a real constant. Now, the idea of the proof is the following (see \cite{MR1213453}). We consider the ideal $\mathcal{I}=\langle P_1,P_2,P_3 \rangle$ of $\C[\kappa_1,\kappa_2,\kappa_3]$ generated by the polynomials $P_i,\,i=1,2,3$. The associated algebraic variety can be described by means of a Groebner basis of the ideal $\mathcal{I}$. It follows that the solutions of the polynomial system
\[
P_1(\kappa_1,\kappa_2,\kappa_3)=0,\quad P_2(\kappa_1,\kappa_2,\kappa_3)=0,\quad P_3(\kappa_1,\kappa_2,\kappa_3)=0
\] 
correspond to the common zeroes of the elements of a Groebner basis of the ideal $\mathcal{I}$. Next, the Mathematica$^{\tiny{\textcircled{R}}}$ code
\[
{\rm GroebnerBasis}[\{P_1[\kappa_1,\kappa_2,\kappa_3],P_2[\kappa_1,\kappa_2,\kappa_3],P_3[\kappa_1,\kappa_2,\kappa_3]\},\{\kappa_1,\kappa_2,\kappa_3 \}]//{\rm FullSimplify}
\]
gives a convenient, simplified Groebner basis $\mathcal{G}\mathcal{B}$ of $\mathcal{I}$. The outcome is:
\[
\mathcal{G}\mathcal{B}=\left\{(c-1) c \left((c-2) \kappa_3^2+c\right),2 \left(\kappa_2^2+\kappa_3^2\right)-c ((c-3) c+4),(c-2) (c-1) c+2 \kappa_1^2\right\}\,.
\]
By way of summary, it remains to prove that, for all $c>0$, the system
\begin{equation}\label{SystemGBr=4n=4}
\left \{
\begin{array}{ll}
{\rm (i)} &c(c-1)  \left((c-2) \kappa_3^2+c\right)=0\\
{\rm (ii)} &2 \left(\kappa_2^2+\kappa_3^2\right)-c ((c-3) c+4)=0\\
{\rm (iii)} &c(c-2) (c-1)+2 \kappa_1^2=0
\end{array}
\right .
\end{equation}
admits no solution with $\kappa_i>0, \,i=1,2,3$. From equation \eqref{SystemGBr=4n=4}(iii) we deduce that necessarily $1<c<2$. Next, from equation \eqref{SystemGBr=4n=4}(i) we infer
\[
\kappa_3^2=\frac{c}{(2-c)}\,.
\]
Substituting this condition into \eqref{SystemGBr=4n=4}(ii) we find:
\[
\frac{c (c-1) ((c-4) c+6)}{(2-c)}+2 \kappa_2^2 =0\,.
\]
This is a contradiction since it is easy to check that
\[
\frac{c(c-1) ((c-4) c+6)}{(2-c)}
\]
is positive when $1<c<2$, so ending the proof of the case $r=4$.
\vspace{2mm}

\textbf{Case $r=5$:}

The scheme of the proof is identical to that of the previous case. Now we have:
\[
\begin{aligned}
P_1(\kappa_1,\kappa_2,\kappa_3)=&\kappa_1^8+4 \left(\kappa_2^2-1\right) \kappa_1^6+\kappa_2^2 \left(\kappa_2^2+\kappa_3^2-1\right) \left(\kappa_2^2+\kappa_3^2\right)^2\\
&+3 \kappa_2^2 \left(2 \kappa_2^2+\kappa_3^2-3\right) \kappa_1^4+2 \left(2 \kappa_2^6+3 \left(\kappa_3^2-1\right) \kappa_2^4+\kappa_3^2 \left(\kappa_3^2-2\right) \kappa_2^2\right) \kappa_1^2
\end{aligned}
\]\,
and
\[
\begin{aligned}
P_2(\kappa_1,\kappa_2,\kappa_3)=&\kappa_1^6+\left(3 \kappa_2^2+\kappa_3^2-3\right) \kappa_1^4+\left(\kappa_2^2+\kappa_3^2-1\right) \left(\kappa_2^2+\kappa_3^2\right)^2\\
&+\kappa_1^2 \left(3 \kappa_2^4+4 \left(\kappa_3^2-1\right) \kappa_2^2+\kappa_3^2 \left(\kappa_3^2-2\right)\right)\,.
\end{aligned}
\]
Next, after computation of a Groebner basis, we find that system \eqref{SystemGBr=4n=4} here takes the following form:
{
\begin{equation}\label{SystemGBr=5n=4}
\begin{cases}
{\rm (i)}\quad (c-1)  \left(2 c^2+(c-4) (c-1) \kappa_3^4+2 (c-2) c \kappa_3^2\right)=0\\
{\rm (ii)}\quad 2 (c-2) \kappa_2^2-c (c(c-6) +7) \kappa_3^2-c \left(c^2+c-4\right)=0\\
{\rm (iii)}\quad c \left(2 ((c-1) c-1) \kappa_2^2-c (c+1) \left(c^2+c-3\right)\right)+\\
\qquad -\kappa_3^2 \left(c^5-5 c^4+2 c^3+2 c^2+4 c-2 \kappa_2^2\right)+2 \kappa_3^4=0\\
{\rm (iv)}\quad 4 c^2 + 9 c^3 - 10 c^4 + c^5 + 8 c \kappa_2^2 - 8 c^2 \kappa_2^2 + \\
 \qquad -  8 \kappa_2^4 + 4 c \kappa_2^4 - (4 c^4-20 c^3+16 c^2+8 c) \kappa_3^2 - 
   4 (-2 + c) \kappa_3^4=0\\
{\rm (v)}\quad \kappa_1^2 + \kappa_2^2 + \kappa_3^2-c=0\,.
\end{cases}
\end{equation}
}

Then it remains to show that, for all $c>0$, this system admits no solution with $\kappa_i>0, \,i=1,2,3$. 

First, it is easy to check that there is no admissible solution if $c=1$. Therefore, from now on, we assume that $c \neq 1$. Then, from equation \eqref{SystemGBr=5n=4}(i) we find
\begin{equation}\label{kappa3-caso1}
\kappa_3^2=\frac{2 c-c^2}{(c-1)(c-4)}-\sqrt{\frac{-c^4+6 c^3-4 c^2}{(c-1)^2(c-4)^2}} \quad {\rm if}\,\, 3-\sqrt{5}<c<1 \,,
\end{equation}
or
\begin{equation}\label{kappa3-caso2}
\kappa_3^2=\frac{2 c-c^2}{(c-1)(c-4)}+\sqrt{\frac{-c^4+6 c^3-4 c^2}{(c-1)^2(c-4)^2}} \quad {\rm if}\,\, 3-\sqrt{5}<c<1 \,\,{\rm or}\,\,1<c<4\,.
\end{equation}
First, we analyse case \eqref{kappa3-caso1}. Replacing this value of $\kappa_3^2$ into \eqref{SystemGBr=5n=4}(ii) we deduce

\begin{equation}\label{kappa2}
\kappa_2^2=\frac{c(c-4) (c-1) \left(c^2+c-4\right)+ ((c-6) c^3+7c^2) \left(2-c-\sqrt{-c^2+6 c-4}\right)}{2 (c-1) (c-2) (c-4)}\,.
\end{equation} 

Next, inserting \eqref{kappa2} and \eqref{kappa3-caso1} into \eqref{SystemGBr=5n=4}(v) we obtain:
\[
g(c)+\kappa_1^2=0\,,
\]
where
\[
g(c)=\frac{c(c-1)  \left(2-\sqrt{-c^2+6 c-4}\right)}{2 (c-2)}\,.
\]
Finally, a straightforward analysis shows that the function $g(c)$ is nonnegative on the interval $3-\sqrt{5}<c<1$ and this ends the proof in the case \eqref{kappa3-caso1}.

The case \eqref{kappa3-caso2} is similar and so we omit the details.
\end{proof}
\section{Appendix}\label{appendix}
In this paper we focused on the study of $r$-harmonic \textit{helices}. A natural, related general question is:

\begin{problem} Let $\gamma$ be an arc length parametrized proper $r$-harmonic curve in a Riemannian manifold $(M,g)$. Under which assumptions is $\gamma$ necessarily a helix?
\end{problem}

We know that in general the answer is negative, as shown in Proposition~3.3 of \cite{MR4308322}, where the authors proved the existence of a triharmonic curve with non constant curvature and torsion in a surface. If the dimension of the ambient space $M$ is greater than $2$ also partial answers would be interesting and the problem is open even in the case that $M=\s^m$.

In this appendix we discuss briefly some related results.

First, we recall the following fact which derives from the explicit analysis of the tangential component of the $3$-tension field $\tau_3(\gamma$):

\begin{proposition} \cite{MR4308322} Let $\gamma$ be an arc length parametrized proper triharmonic curve in a $3$-dimensional Riemannian manifold $(M^3,g)$. Then
\begin{equation}\label{integraleprimo}
2 \frac{d}{ds} \Big (k(s)^2 k''(s) \Big )= k(s)\frac{d}{ds} \Big (k(s)^2 \left [ k(s)^2+\tau(s)^2\right ] \Big )\,.
\end{equation}
\end{proposition}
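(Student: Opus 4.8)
The plan is to read off \eqref{integraleprimo} from the tangential (i.e.\ $T$-)component of the equation $\tau_3(\gamma)=0$. Since $M$ is $3$-dimensional and $\gamma$ has positive curvature $k>0$, it carries a Frenet frame $\{T,N,B\}$ with $\nabla_T T = kN$, $\nabla_T N = -kT+\tau B$, $\nabla_T B = -\tau N$. Writing $\nabla_T^j T = P_j\,T + Q_j\,N + S_j\,B$, the Frenet equations give the recursion
\begin{equation*}
P_{j+1}=P_j'-kQ_j,\qquad Q_{j+1}=Q_j'+kP_j-\tau S_j,\qquad S_{j+1}=S_j'+\tau Q_j,
\end{equation*}
with $(P_0,Q_0,S_0)=(1,0,0)$. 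First I would iterate this up through order five; although only $P_5$ is needed at the end, computing it requires carrying the full triple up through order four.

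Next I would note that the curvature terms of
\[
\tau_3(\gamma)=\nabla_T^5 T + R(\nabla_T^3 T,T)T - R(\nabla_T^2 T,\nabla_T T)T
\]
drop out of the $T$-component: with the paper's sign convention $\langle R(X,Y)Z,W\rangle$ is skew-symmetric in $Z,W$, hence $\langle R(X,Y)T,T\rangle=0$ for all $X,Y$. Therefore the $T$-component of $\tau_3(\gamma)=0$ is simply $P_5=0$. The recursion evaluates to
\[
P_5 = 5\Big(2k^3k' + kk'\tau^2 + k^2\tau\tau' - 2k'k'' - kk'''\Big),
\]
and since $\tfrac12\frac{d}{ds}\big(k^2(k^2+\tau^2)\big)=2k^3k'+kk'\tau^2+k^2\tau\tau'$ and $\frac{d}{ds}\big(k^2k''\big)=k\,(2k'k''+kk''')$, multiplying $P_5=0$ by $k$ and rearranging yields $2\frac{d}{ds}(k^2k'') = k\frac{d}{ds}\big(k^2(k^2+\tau^2)\big)$, which is \eqref{integraleprimo}.

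The only genuine effort is the bookkeeping: iterating the $3\times 3$ recursion to order five without sign errors and then recognizing the resulting expression as a combination of total derivatives. Everything else is structural; in particular the vanishing of the curvature contribution in the tangential direction is what makes \eqref{integraleprimo} a statement purely about $k$ and $\tau$. As a minor shortcut one could instead use $\frac{d^5}{ds^5}\langle T,T\rangle=0$ to write $P_5=-5\langle\nabla_T^4 T,\nabla_T T\rangle-10\langle\nabla_T^3 T,\nabla_T^2 T\rangle$, but this needs the same lower-order data, so the direct recursion is just as convenient.
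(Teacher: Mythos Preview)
Your proof is correct and follows precisely the route the paper indicates: the proposition is recorded there (citing \cite{MR4308322}) as deriving ``from the explicit analysis of the tangential component of the $3$-tension field $\tau_3(\gamma)$,'' which is exactly what you carry out. The key observations---that the curvature terms have vanishing $T$-component by the skew-symmetry of $R$, and that the resulting $P_5=0$ rearranges into \eqref{integraleprimo}---are both sound and your computation of $P_5$ is accurate.
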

As an immediate corollary, we conclude that \textit{if $k(s)$ is a constant, then also $\tau(s)\equiv \tau_o \in \R$. }

As a partial converse, in \cite{MR4308322} the authors obtained: 
\begin{proposition}\label{prop4-mont-Pampano} \cite{MR4308322} Let $\gamma$ be an arc length parametrized proper triharmonic curve in a $3$-dimensional BCV-space $M^3_{m,\ell}$, $4m \neq \ell^2$. If $\tau(s)\equiv 0$, then $k(s)$ is constant.
\end{proposition}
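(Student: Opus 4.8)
The plan is to turn the triharmonicity condition $\tau_3(\gamma)=0$ into a completely explicit overdetermined system for $\kappa$ and then rule out non--constant $\kappa$ case by case. Since $\tau\equiv0$, the Frenet equations reduce to $\nabla_T T=\kappa N$, $\nabla_T N=-\kappa T$, so $B$ is parallel and, exactly as in \eqref{formula-nablaelleTT-2-Frenet}, every $\nabla_T^\ell T$ stays in $\mathrm{Span}\{T,N\}$; a short induction gives $\nabla_T^3 T=-3\kappa\kappa' T+(\kappa''-\kappa^3)N$ and
\[
\nabla_T^5 T=\big(-5\kappa\kappa'''-10\kappa'\kappa''+10\kappa^3\kappa'\big)T+\big(\kappa''''-15\kappa(\kappa')^2-10\kappa^2\kappa''+\kappa^5\big)N .
\]
Inserting these into \eqref{r-harmonicity-curves} with $r=3$, the two curvature terms collapse to $(\kappa''-2\kappa^3)R(N,T)T$, and, using \eqref{tensore-curvatura-general-expression} together with $E_3=T_3T+N_3N+B_3B$, the vector $R(N,T)T$ has zero $T$--component, $N$--component $\rho:=\tfrac{\ell^2}{4}+(4m-\ell^2)B_3^2$ and $B$--component $-(4m-\ell^2)N_3B_3$. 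Hence $\tau_3(\gamma)=0$ is equivalent to
\begin{gather*}
\mathrm{(I)}\ \ \kappa\kappa'''+2\kappa'\kappa''-2\kappa^3\kappa'=0,\\
\mathrm{(II)}\ \ \kappa''''-15\kappa(\kappa')^2-10\kappa^2\kappa''+\kappa^5+(\kappa''-2\kappa^3)\rho=0,\\
\mathrm{(III)}\ \ (\kappa''-2\kappa^3)(4m-\ell^2)\,N_3\,B_3=0,
\end{gather*}
where $\mathrm{(I)}$ is precisely \eqref{integraleprimo} specialized to $\tau=0$.

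I would then record the auxiliary first--order system $T_3'=\kappa N_3$, $N_3'=-\kappa T_3-\tfrac{\ell}{2}B_3$, $B_3'=\tfrac{\ell}{2}N_3$, obtained from \eqref{CNil} as in the proof of Lemma~\ref{Th-N3=0-BCV}, together with $T_3^2+N_3^2+B_3^2=1$. Equation $\mathrm{(I)}$ integrates once to $\kappa\kappa''+\tfrac12(\kappa')^2-\tfrac12\kappa^4=c_0$; on any interval where $\kappa'\neq0$, regarding $v:=(\kappa')^2$ as a function of $\kappa$ this becomes the linear ODE $(\kappa v)_\kappa=\kappa^4+2c_0$, whose solution is $v=\tfrac15\kappa^4+2c_0+c_1\kappa^{-1}$ for constants $c_0,c_1$; differentiating then expresses $\kappa'',\kappa''',\kappa''''$ as functions of $\kappa$.

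Since $\gamma$ solves the sixth--order ODE $\tau_3(\gamma)=0$ whose coefficients are real--analytic (the BCV metric is rational with nonvanishing denominator $1+m(x^2+y^2)$), the functions $\gamma,\kappa,T_3,N_3,B_3$ are real--analytic and $\kappa>0$; hence, from $\mathrm{(III)}$ and $4m\neq\ell^2$, one of the analytic functions $\kappa''-2\kappa^3$, $N_3$, $B_3$ vanishes identically, and I would treat the (at most three) resulting cases. If $\kappa''-2\kappa^3\equiv0$ then $\kappa'''=6\kappa^2\kappa'$ and $\mathrm{(I)}$ forces $8\kappa^3\kappa'=0$, hence $\kappa'\equiv0$ and then $\kappa\equiv0$: impossible, so this case is vacuous. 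If $N_3\equiv0$, then $T_3,B_3$ are constant and $N_3'=0$ gives $\kappa T_3=-\tfrac{\ell}{2}B_3$: for $\ell\neq0$, either $T_3\neq0$ and $\kappa$ is constant, or $T_3=0$ and then $B_3=0$, contradicting $|E_3|=1$; for $\ell=0$, $\kappa T_3=0$ forces $T_3=0$, hence $B_3=\pm1$, $B=\pm E_3$, so $\gamma$ lies in the totally geodesic slice $M^2(4m)\times\{z_o\}$ and $\rho=4m$ is constant. If $B_3\equiv0$: for $\ell\neq0$ this forces $N_3\equiv0$, hence $\kappa\equiv0$, a contradiction; for $\ell=0$ it gives $\rho=0$ constant.

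It therefore remains to exclude non--constant $\kappa$ when $\ell=0$ and $\rho\in\{4m,0\}$ is a constant. Here $\mathrm{(I)}$ and $\mathrm{(II)}$ form an overdetermined system for the single function $\kappa$: substituting into $\mathrm{(II)}$ the expressions for $\kappa'',\kappa''',\kappa''''$ and $(\kappa')^2$ coming from the first integral $v(\kappa)=\tfrac15\kappa^4+2c_0+c_1\kappa^{-1}$ turns $\mathrm{(II)}$ into a rational identity in $\kappa$, and clearing denominators yields a polynomial in $\kappa$ whose top coefficient is the numerical constant $-\tfrac{126}{25}\neq0$. On any interval where $\kappa$ is non--constant it takes a continuum of values, so this polynomial would have to vanish identically — impossible. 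Hence $\kappa$ is constant in every case. The main obstacle is the computational accuracy of this last step — obtaining $\nabla_T^5 T$, and therefore equation $\mathrm{(II)}$, exactly right, and verifying that the leading coefficient does not degenerate after substituting the first integral — together with the bookkeeping of the $\ell=0$ situations, where $M^3_{m,0}$ is a product and the curve may sit inside a totally geodesic slice.
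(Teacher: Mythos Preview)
Your proof is correct and follows the same architecture as the paper: derive the system (I)--(III) for the components of $\tau_3(\gamma)$ along $\{T,N,B\}$, use (III) together with $4m\neq\ell^2$ to split into cases, integrate (I) to express all derivatives of $\kappa$ as functions of $\kappa$, and substitute into (II) to obtain a nontrivial polynomial relation forcing $\kappa$ constant. The paper does exactly this (with the polynomial appearing in degree $10$ after clearing denominators, leading coefficient $\pm\tfrac{126}{25}$), and it treats the cases $N_3=0$ and $B_3=0$ uniformly via the polynomial argument regardless of $\ell$.

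Your treatment differs in one pleasant respect: you exploit the extra relation $N_3'=-\kappa T_3-\tfrac{\ell}{2}B_3$, which the paper does not compute, to dispatch the case $\ell\neq0$ directly. Once $N_3\equiv0$ you get $\kappa T_3=-\tfrac{\ell}{2}B_3$ with $T_3,B_3$ constant, and a short dichotomy forces $\kappa$ constant (or a contradiction) without ever touching the polynomial substitution. This buys a cleaner argument when $\ell\neq0$; on the other hand, the paper's uniform approach avoids both your $\ell=0$/$\ell\neq0$ bifurcation and your appeal to real-analyticity for the global case split (which is fine, but the paper gets by with a local argument: at any point where $N_3B_3\neq0$ one has $\kappa''=2\kappa^3$, whence (I) gives $\kappa'=0$ there and then $\kappa=0$, a contradiction). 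Both proofs ultimately rest on the same first integral of (I) and the same nonvanishing leading coefficient in (II).
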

The proof of this proposition was only rapidly sketched in \cite{MR4308322} and some conclusions, although correct, do not follow directly from the arguments indicated.  Therefore, we take this opportunity to insert here the missing details. A further motivation to write down this proof lies in the fact that an adaptation of this method will enable us to obtain a stronger conclusion in the case that the ambient space is $\s^m$. More precisely, we shall prove:
\begin{proposition}\label{prop-in-sm} Let $\gamma$ be an arc length parametrized proper triharmonic $3$-Frenet curve in $\s^m$, $m\geq 3$. If $\tau(s)\equiv \tau_o \in \R$, then $k(s)$ is constant.
\end{proposition}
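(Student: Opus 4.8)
The plan is to exploit the same tangential–normal–binormal decomposition of the triharmonicity equation that produced Proposition~\ref{prop4-mont-Pampano}, but now carrying the extra term coming from a possibly nonzero constant torsion $\tau_o$. First I would write out the three scalar equations obtained by projecting $\tau_3(\gamma)=0$ onto $T$, $N$, and $B$. The $T$-component gives precisely the first integral \eqref{integraleprimo}; since $\tau(s)\equiv\tau_o$ is constant, the right-hand side of \eqref{integraleprimo} becomes $k\,\frac{d}{ds}\!\big(k^4+\tau_o^2 k^2\big)$, so \eqref{integraleprimo} reduces to an autonomous second–order ODE relating $k$, $k'$, $k''$ alone. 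Multiplying by a suitable integrating factor (the natural guess being a power of $k$, as in the surface case) I expect to obtain a first integral of the form $k^2 (k')^2 = \Phi(k)$ for an explicit polynomial $\Phi$ in $k$ whose coefficients involve $\tau_o$ and one integration constant.

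Next I would turn to the normal and binormal components of $\tau_3(\gamma)=0$. On $\s^m$ the curvature tensor is given by \eqref{Curv-tensor-sfera}, so these components are explicit in terms of $k,\tau_o,k',k'',k''',k^{(4)}$, with no ambient geometry left to control; in particular the full system is a genuine ODE system for the single unknown $k(s)$ (the higher Frenet curvatures, if $m>3$, must vanish on the span issue, but for a $3$-Frenet curve $\tau_3(\gamma)$ already lies in $\mathrm{Span}\{T,N,B\}$ by the sphere curvature identity, so no extra components arise). The strategy is then to combine the first integral from the $T$-equation with, say, the $N$-equation to eliminate the highest derivatives and arrive at a polynomial identity $Q(k,k') = 0$, or better an identity $R(k)=0$ after using $(k')^2 = \Phi(k)/k^2$, forcing $k$ to take values in a finite set; since $k$ is continuous this gives $k\equiv\mathrm{const}$. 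One must separately rule out the degenerate branch $k'\equiv 0$ trivially (that is the desired conclusion) and check that the ``generic'' branch is genuinely empty, i.e.\ leads to a contradiction with $k>0$ or with properness ($k\not\equiv 0$), exactly the kind of sign analysis used at the end of the proof of Theorem~\ref{Th-Non-exist-4-harmonic-full-Frenet-helices}.

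Concretely, I would proceed in this order: (1) derive the three projected scalar equations on $\s^m$ using \eqref{Curv-tensor-sfera}; (2) from the $T$-equation and constancy of $\tau$, integrate \eqref{integraleprimo} once to get $k^2(k')^2=\Phi(k)$; (3) assume for contradiction that $k$ is nonconstant, so on an open interval $k'\neq 0$; (4) differentiate and substitute into the $N$-equation (and, if needed, the $B$-equation) to remove all derivatives of $k$, obtaining an algebraic relation among $k$ and the integration constants that is either inconsistent or forces $k$ to be locally constant; (5) conclude $k\equiv\mathrm{const}$ by continuity. The main obstacle I anticipate is step~(4): the elimination is a nontrivial algebraic manipulation, and there is a real risk that the resulting relation is not immediately a contradiction but only restricts $k$ to finitely many values via a polynomial equation $R(k)=0$ whose coefficients depend on $\tau_o$ and the integration constant. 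Handling that cleanly—showing $R$ is not identically zero, and that its roots are incompatible with $k>0$ together with the first integral $\Phi(k)\ge 0$—is where the argument will need the most care, and possibly an appeal to a Groebner-basis computation in the spirit of Section~\ref{n-FrenethelicesinSn} to certify that the polynomial system has no admissible solution.
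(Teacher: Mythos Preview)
Your plan is essentially the paper's own argument: project $\tau_3(\gamma)=0$ onto $\{T,N,B\}$, integrate the tangential identity \eqref{integraleprimo} (twice, in fact---you will pick up \emph{two} integration constants $c_1,c_2$, not one, since \eqref{integraleprimo} already involves $k''$) to express $k',k'',k^{(3)},k^{(4)}$ as explicit functions of $k$, and then substitute into the $N$-component. Where you overestimate the difficulty is your step~(5): after substitution the paper obtains a single polynomial equation in $k$ with constant coefficients and leading term $\tfrac{126}{25}k^{10}$, so it is manifestly not identically zero; hence $k(s)$ lies in a finite set and is constant by continuity---no sign analysis of the roots, no compatibility check with $\Phi(k)\ge 0$, and certainly no Groebner basis is needed. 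The $B$-equation is not used at all.
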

\begin{proof}[Proof of Proposition~\ref{prop4-mont-Pampano}] By assumption, $\tau(s)\equiv 0$.  The condition $\tau_3(\gamma)=0$, where  $\tau_3(\gamma)$ is given in \eqref{r-harmonicity-curves}, can be expressed by the vanishing of the components of  $\tau_3(\gamma)$ with respect to the base $\{T,N,B \}$. By a straightforward computation, taking into account \eqref{curv-3}, we thus obtain that $\gamma$ is triharmonic if and only if

\begin{equation}\label{eq:apx1}
\begin{cases}
-k^{(3)} k+2 k^3 k'-2 k' k''=0\\
4k^{(4)}+ \left(16 m B_3^2-\ell ^2 \left(4 B_3^2-1\right)\right)(k''-2 k^3)-40 k^2 k''-60 k k'^2+4k^5=0\\
N_3 B_3 \left(4 m-\ell ^2\right) \left(2 k^3-k''\right)=0\,.
\end{cases}
\end{equation}
Now, let us first examine the component of $\tau_3(\gamma)$ along $B$, that is the third equation of \eqref{eq:apx1}. We assume that $N_3B_3 \neq 0$ near some point $s_0$. Then, in this open set we must have
\[
k''=2 k^3\,.
\]
Inserting this condition into the first equation of \eqref{eq:apx1} it is easy to deduce that
\[
-8 k^3 k'=0
\]
and so $k(s)$ must be a constant. Therefore, it remains to analyse the case that $N_3 B_3=0$ along $\gamma$. First, let us assume that $N_3=0$. Then it follows from \eqref{BprimeTprime} that $B_3$ is constant. Next, using the prime integral \eqref{integraleprimo}, performing a further integration and the necessary replacements, it is easy to deduce:
\begin{equation}\label{derivate}
\begin{aligned}
k' =& \;\sqrt{\frac{k ^4}{5}-\frac{2 c_1} {k }+c_2}\\
k'' =&\; \frac{2 k ^3}{5}+\frac{c_1}{k ^2}\\
k^{(3)} =& \;\frac{6}{5} k'  k ^2-\frac{2 c_1 k' }{k ^3}=\frac{2 \left(3 k ^5-5 c-1\right) \sqrt{\frac{k ^4}{5}-\frac{2 c_1} {k }+c_2}}{5 k ^3}\\
k^{(4)} =& \;\frac{6 c_1{k'} ^2}{k ^4}-\frac{2 c_1k'' }{k ^3}+\frac{12}{5}{k'} ^2 k +\frac{6}{5} k''  k ^2=\frac{2 \big(2 k ^5+5 c_1\big) \big(6 k ^5+15 c_2 k -35 c_1\big)}{25 k ^5}
\end{aligned}
\end{equation}
where $c_1, c_2$ are integration constants. To be precise, we point out that also the case
\[
k'=-\sqrt{\frac{k ^4}{5}-\frac{2 c_1} {k }+c_2}
\]
could occur, but all what follows would work analogously and so we just describe the case \eqref{derivate}. Finally, we insert all the expressions \eqref{derivate} in the normal component of $\tau_3(\gamma)$, that is the second equation of \eqref{eq:apx1}. Performing the necessary simplifications we deduce that

\[
\frac{126}{25}  k ^{10}- \frac{1}{20}\left(4 B_3^2 \big(\ell ^2+4 m\right)-\ell ^2\big)(8k^8-5 c_1 k ^3)+\frac{63}{5} c_2 k ^6-\frac{84}{5} c_1 k ^5-6c_1c_2 k+14 c_1^2=0\,.
\]
It follows that $k(s)$ is a root of a not identically zero polynomial with constant coefficients and so $k(s)$ is constant, as required. If $B_3=0$, then $B_3$ is a constant and so the end of the proof is the same. 
\end{proof}
\begin{proof}[Proof of Proposition~\ref{prop-in-sm}] By assumption, $\tau(s)\equiv \tau_o$. Then a straightforward computation shows that the vanishing of the components of $\tau_3(\gamma)$ with respect to the base $\{T,N,B \}$ is equivalent to the following system:
\begin{equation}\label{eq:apx2}
\begin{cases}
2 k ^3 k' +k  \left(\tau_o ^2 k' -k^{(3)} \right)-2 k'  k'' =0\\
k^{(4)} +\left(1-6 \tau_o ^2\right) k'' -10 k ^2 k''+k  \left(-15 {k'} ^2+\tau_o ^4-\tau_o ^2\right)+2 \left(\tau_o ^2-1\right) k ^3+k ^5=0\\
4\tau_o  k^{(3)} + \tau_o \big(-9 k ^2-4 \tau_o ^2+2\big) k' =0\,.
\end{cases}
\end{equation}
Next, we use again the prime integral \eqref{integraleprimo} and derive the analogous of \eqref{derivate} in this context:
\begin{equation}
\begin{aligned}\label{derivate2}
k' =&\; \sqrt{\frac{k ^4}{5}+\frac{1}{3} \tau_o ^2 k ^2-\frac{2 c_1}{k }+c_2}\\
k'' =&\; \frac{c_1}{k ^2}+\frac{1}{3} \tau_o ^2 k +\frac{2 k ^3}{5}\\
k^{(3)} =&\; \frac{\big(18 k ^5+5 \tau_o ^2 k ^3-30 c_1\big) \sqrt{\frac{k ^4}{5}+\frac{1}{3} \tau_o ^2 k ^2-\frac{2 c_1}{k }+c_2}}{15 k ^3}\\
k^{(4)} =&\;\frac{24 k ^5}{25}+\frac{4}{3} \tau_o ^2 k ^3 +\left(\frac{12 c_2}{5}+\frac{\tau_o ^4}{9}\right) k -\frac{14 c_1^2}{k ^5}+\frac{6 c_1c_2}{k ^4}+\frac{5 c_1 \tau_o ^2}{3 k ^2}-\frac{16 c_1}{5} 
\end{aligned}
\end{equation}
where $c_1, c_2$ are integration constants. Finally, we insert all the expressions \eqref{derivate2} in the second equation of \eqref{eq:apx2}. Performing the necessary simplifications we deduce that
{\small
\[
\begin{aligned}
\frac{126}{25}  k ^{10}+\frac{37 \tau_o ^2+8}{5} k ^8+\frac{567 c_2+40 \tau_o ^4+30 \tau_o ^2}{45} k ^6
-\frac{84}{5} c_1 k ^5-\frac{3c_1-13 c_1 \tau_o ^2}{3} k ^3-6 c_1 c_2 k +14 c_1^2=0\,.
\end{aligned}
\]
}

\noindent Therefore, $k(s)$ is a root of a not identically zero polynomial with constant coefficients and so $k(s)$ is constant, as required to end the proof.
\end{proof}

%
%

%

{\bf Funding} The authors S.M. and A.R. are members of the Italian National Group G.N.S.A.G.A. of INdAM. The work was partially supported by the Project {ISI-HOMOS} funded by Fondazione di Sardegna and by PNRR e.INS Ecosystem of Innovation for Next Generation Sardinia (CUP F53C22000430001, codice MUR ECS00000038)

\section*{Declarations}

{\bf Conflict of interest} The authors declare that they have no competing interests. 

{\bf Ethical approval} Not applicable.

{\bf Data availability} Not applicable.

\end{document}